\definecolor{darkgreen}{rgb}{0,.5,0}
\numberwithin{equation}{section}
\newtheorem{theorem}{Theorem}[section]
\newtheorem{corollary}[theorem]{Corollary}
\newtheorem{lemma}{Lemma}[section]
\begin{document}
%\begin{CJK}{Bg5}{ttkai}
%%%%%%%%%%%%%%%%%%%%%%%%%%%%%%%%%%%%%%%%%%%%%%
\footnotetext{
\emph{2010 Mathematics Subject Classification.} Primary 42B20, 47B07; Secondary: 42B99,47G99.

\emph{Key words and phrases.} Bilinear Calder\'{o}n-Zygmund operator; Bilinear fractional integral operator; Characterization; Compactness; Commutator}

\title[]{Sharp estimates for commutators of bilinear operators on Morrey type spaces}

\author[]{Dinghuai Wang, Jiang Zhou$^\ast$ and Zhidong Teng}
\address{College of Mathematics and System Sciences, Xinjiang University,  Urumqi 830046 \endgraf
         Republic of China}
\email{Wangdh1990@126.com; zhoujiang@xju.edu.cn; zhidong1960@163.com}
\thanks{The research was supported by National Natural Science Foundation
of China (Grant No.11661075 and No. 11271312). \\ \qquad * Corresponding author, zhoujiang@xju.edu.cn.}

%%%%%%%%%%%%%%%%%%%%%%%%%%%%%%%%%%%%%%%%%%%%%%
\begin{abstract}
Denote by $T$ and $I_{\alpha}$ the bilinear Calder\'{o}n-Zygmund operators and bilinear fractional integrals, respectively. In this paper, it is proved that if $b_{1},b_{2}\in {\rm CMO}$ (the {\rm BMO}-closure of $C^{\infty}_{c}(\mathbb{R}^n)$), $[\Pi \vec{b},T]$ and $[\Pi\vec{b},I_{\alpha}]$ $(\vec{b}=(b_{1},b_{2}))$ are all the compact operators from $\mathcal{M}^{p_{0}}_{\vec{P}}$ (the norm of $\mathcal{M}^{p_{0}}_{\vec{P}}$ is strictly smaller than $2-$fold product of the Morrey norms) to $M^{q_{0}}_{q}$ for some suitable indexes $p_{0},p_{1},p_{2}$ and $q_{0},q$. Specially, we also show that if $b_{1}=b_{2}$, then $b_{1}, b_{2}\in {\rm CMO}$ is necessary for the compactness of $[\Pi\vec{b},I_{\alpha}]$ on Morrey space.
\end{abstract}
\maketitle

%%%%%%%%%%%%%%%%%%%%%%%%%%%%%%%%%%%%%%%%%%%%%%
%%%%%%%%%%%%%%%%%%%%%%%%%%%%%%%%%%%%%%%%%%%%%%
\maketitle

%%%%%%%%%%%%%%%%%%%%%%%%%%%%%%%%%%%%%%%%%%%%%%
%%%%%%%%%%%%%%%%%%%%%%%%%%%%%%%%%%%%%%%%%%%%%%
\section{Introduction}

The aim of the present paper is first: to obtain the boundedness and compactness of iterated commutators of bilinear operators acting on multi-Morrey spaces (a multi-Morrey norm is strictly smaller than $m-$fold product of the Morrey norms); and second: to characterize the compactness of the iterated commutators of bilinear fractional integral operators on Morrey spaces.

A well known result of Coifman, Rochberg and Weiss \cite{CRW} states that the commutator
$$[b,T](f)=bT(f)-T(bf)$$
is bounded on some $L^p$, $1<p<\infty$, if and only if $b\in \mathrm{BMO}$, where $T$ be the classical Calder\'{o}n-Zygmund operator. In 1978, Uchiyama \cite{U} refined the boundednss results on the commutator to compactness. This is a achieved by requiring the commutator with symbol to be in ${\rm CMO}$, which is the closure in ${\rm BMO}$ of the space of $C^{\infty}$ functions with compact support. In recent years, the compactness of commutators has been extensively studied already, Wang \cite{W} showed that the compactness of commutator of fractional integral operator and Ding et al. \cite{CD1}, \cite{CD2}, \cite{CD3}, \cite{CDW1}, \cite{CDW2} \cite{CDW3} also considered the compactness of commutators for some operators, such as the Riesz potential, singular integral, Marcinkiewicz integral in Morrey spaces. The interest in the compactness of commutators in complex analysis is from the connection between the commutators and the Hankel-type operators. In fact, the authors of \cite{KL1} and \cite{KL2} have applied commutator theory to give a compactness characterization of Hankel operators on holomorphic Hardy spaces $H^{2}(D)$, where $D$ is a bounded, strictly pseudoconvex domain in $\mathbb{C}^n$. It is perhaps for this important reason that the compactness of commutators attracted one¡¯s attention among researchers in PDEs.

Recently, many authors are interested in the multilinear setting, see \cite{BO}, \cite{G}, \cite{GK}, \cite{GT} and \cite{M}. The multilinear Calder\'{o}n-Zygmund theory originated in the works of Coifman and Meyer in the 70s, see e.g.\cite{CM1}, \cite{CM2}. Later on the topic was retaken by several authors; including Christ and Journ\'{e} \cite{CJ}, Kenig and Stein \cite{KS} and Grafakos and Torres \cite{GT}. The boundedness results for commutators with symbols in {\rm BMO} started to receive attention only a few years ago, see \cite{LOPTT}, \cite{P}, \cite{PT} or \cite{T}. Compactness results in the multilinear setting have just began to be studied. B\'{e}nyi et al. \cite{B1}, \cite{B2} and \cite{BT} showed that symbols in {\rm CMO} again produce compact commutators. Ding and Mei \cite{DM} consider the compactness of linear commutator of bilinear operators from product of Morrey spaces to Morrey spaces. In this paper, some sharp estimates for compactness of commutators of bilinear operators will be given; that is, it is proved that bilinear operators are all compact operators from multi-Morrey spaces(precise definition is given in the next secion) to Morrey spaces.

Another subject of this paper is to consider the characterization of compactness of the iterated commutator of bilinear fractional integral operators. For linear fractional integrals, the characterization of boundedness of the commutator was obtained by Chanillo \cite{C}, while the one for compactness is credited in \cite{CDW1} and \cite{W}. In the bilinear setting, in 2015, Chaffee and Torres \cite{CT} characterized the compactness of the linear commutators of bilinear fractional integral operators acting on product of Lebesgue spaces. In \cite{WZC}, we obtain the characterization of compactness of iterated commutators of bilinear fractional integral operators acting on product of Lebesgue spaces. In this paper, we will show that {\rm CMO} in fact characterizes compactness on Morrey spaces.
\vspace{0.5cm}
\section{Preliminaries and Main results}

\subsection{Bilinear Calder\'{o}n-Zygmund operator and its commutator}

%The bilinear Calder\'{o}n-Zygmund operator is defined by
%$$T: \mathcal{S}(\mathbb{R}^{n})\times \mathcal{S}(\mathbb{R}^n)\rightarrow \mathcal{S}'(\mathbb{R}^n),$$
%and for some $1\leq q_{1},q_{2}<\infty$, $T$ can extend to a bounded bilinear operator from $L^{q_{1}}\times L^{q_{2}}$ to $L^{q}$, where $\frac{1}{q}=\frac{1}{q_{1}}+\frac{1}{q_{2}}$. In this paper, we always assume that the function $K$, defined off the diagonal $x=y=z$ in $(\mathbb{R}^{n})^{2+1}$, satisfies the conditions as follow:

Recall that bilinear singular integral operator $T$ is a bounded operator which satisfies
$$\|T(f_{1},f_{2})\|_{L^{p}}\leq C\|f_{1}\|_{L^{p_{1}}}\|f_{2}\|_{L^{p_{2}}},$$
for some $1<p_{1},p_{2}<\infty$ with $1/p=1/p_{1}+1/p_{2}$ and the function $K$, defined off the diagonal $y_{0}=y_{1}=y_{2}$ in $(\mathbb{R}^{n})^{2+1}$, satisfies the conditions as follow:

(1) The function $K$ satisfies the size condition.
$$|K(x,y_{1},y_{2})|
\leq\frac{C}{\big(|x-y_{1}|+|x-y_{2}|+|y_{1}-y_{2}|\big)^{2n}};$$

(2) The function $K$ satisfies the regularity condition. For some $\gamma>0$, if $|x-x'|\leq \frac{1}{2}\max\{|x-y_{1}|,|x-y_{2}|,|x-y_{2}|\}$
$$|K(x,y_{1},y_{2})-K(x',y_{1},y_{2})|
\leq\frac{C|x-x'|^{\gamma}}{\big(|x-y_{1}|+|x-y_{2}|+|y_{1}-y_{2}|\big)^{2n+\gamma}};$$
if $|y_{1}-y'_{1}|\leq \frac{1}{2}\max\{|x-y_{1}|,|x-y_{2}|,|y_{1}-y_{2}|\}$
$$|K(x,y_{1},y_{2})-K(x',y_{1},y_{2})|
\leq\frac{C|y_{1}-y'_{1}|^{\gamma}}{\big(|x-y_{1}|+|x-y_{2}|+|y_{1}-y_{2}|\big)^{2n+\gamma}};$$
if $|y_{2}-y'_{2}|\leq \frac{1}{2}\max\{|x-y_{1}|,|x-y_{2}|,|y_{1}-y_{2}|\}$
$$|K(x,y_{1},y_{2})-K(x',y_{1},y_{2})|
\leq\frac{C|y_{2}-y'_{2}|^{\gamma}}{\big(|x-y_{1}|+|x-y_{2}|+|y_{1}-y_{2}|\big)^{2n+\gamma}};$$

(3) If $x\notin {\rm supp} f_{1}\bigcap {\rm supp} f_{2}$, then
$$T(f_{1},f_{2})(x)=\int_{\mathbb{R}^n}\int_{\mathbb{R}^n}K(x,y_{1},y_{2})f_{1}(y_{1}) f_{2}(y_{2})dy_{1}dy_{2}.$$

It was shown that in \cite{GT} that if $\frac{1}{r_{1}}+\frac{1}{r_{2}}=\frac{1}{r}$, then an bilinear Calder\'{o}n-Zygmund operator satisies
$$T: L^{r_{1}}\times L^{r_{2}}\rightarrow L^{r}$$
when $1<r_{1},r_{2}<\infty$ and
$$T: L^{r_{1}}\times L^{r_{2}}\rightarrow L^{r,\infty}$$
when $1\leq r_{1},r_{2}<\infty$. In particular
$$T: L^{1}\times L^{1}\rightarrow L^{1/2,\infty}.$$

In 2003, P\'{e}rez and Torres in \cite{PT} defined the commutator $[\Pi\vec{b},T]$ as follows
\begin{eqnarray*}
&&[\Pi\vec{b},T](f_{1},f_{2})(x):=[b_{2},[b_{1},T]_{1}]_{2}(f_{1},f_{2})(x)\\
&&:=\int_{\mathbb{R}^n}\int_{\mathbb{R}^n}
(b_{1}(x)-b_{1}(y_{1}))(b_{2}(x)-b_{2}(y_{2}))K(x,y_{1},y_{2})f_{1}(y_{1})f_{2}(y_{2})dy_{1}dy_{2}.
\end{eqnarray*}
They also proved that if $b_{1},b_{2}\in {\rm BMO}$, then
$$[\Pi\vec{b},T]: L^{r_{1}}\times L^{r_{2}}\rightarrow L^{r}$$
when $1<r_{1},r_{2}<\infty$ with $\frac{1}{r}=\frac{1}{r_{1}}+\frac{1}{r_{2}}$.

The maximal operator $T_{*}$ of bilinear Calderon-Zygmund operator $T$ is defined by
$$
T_{*}(f_{1},f_{2})(x)=\sup_{\delta>0}\bigg|\int_{|x-y_{1}|+|x-y_{2}|>\delta}K(x,y_{1},y_{2})f_{1}(y_{1}) f_{2}(y_{2})dy_{1}dy_{2}\bigg|.
$$
In 2002, Grafakos and Torres in \cite{GT} proved that
$$T_{*}:L^{r_{1}}\times L^{r_{2}}\rightarrow L^{r}$$
when $1<r_{1},r_{2}<\infty$ with $\frac{1}{r}=\frac{1}{r_{1}}+\frac{1}{r_{2}}$.
\vspace{0.3cm}

\subsection{Bilinear fractional integral operator and its commutator}

It is well known that the fractional integral $\mathcal{I}_{\alpha}$ of order $\alpha (0<\alpha<n)$ plays an
important role in harmonic analysis, PDE and potential theory (see \cite{S}). Recall that
$\mathcal{I}_{\alpha}$ is defined by
$$\mathcal{I}_{\alpha}f(x)=\int_{\mathbb{R}^n}\frac{f(y)}{|x-y|^{n-\alpha}}dy.$$
For the bilinear case, the bilinear fractional integral operator $I_{\alpha}$, $0<\alpha<2n$, is defined by
$$I_{\alpha}(f_{1},f_{2})(x)=\int_{\mathbb{R}^n}\int_{\mathbb{R}^n}\frac{f_{1}(y_{1})f_{2}(y_{2})}{\big(|x-y_{1}|+|x-y_{2}|\big)^{2n-\alpha}}dy_{1}dy_{2}.$$
In this paper, we will consider the following equivalent operator
$$I_{\alpha}(f_{1},f_{2})(x)=\int_{\mathbb{R}^n}\int_{\mathbb{R}^n}\frac{f_{1}(y_{1})f_{2}(y_{2})}{\big(|x-y_{1}|^{2}+|x-y_{2}|^{2}\big)^{n-\alpha/2}}dy_{1}dy_{2}.$$
Its iterated commutator with $\vec{b}=(b_{1},b_{2})$ is given by
\begin{eqnarray*}
&&[\Pi\vec{b},I_{\alpha}](f_{1},f_{2})(x):=[b_{2},[b_{1},I_{\alpha}]_{1}]_{2}(f_{1},f_{2})(x)\\
&&=\int_{\mathbb{R}^n}\int_{\mathbb{R}^n}\frac{( b_{1}(x)-b_{1}(y_{1}))( b_{2}(x)-b_{2}(y_{2}))f_{1}(y_{1})f_{2}(y_{2})}{\big(|x-y_{1}|^{2}+|x-y_{2}|^{2}\big)^{n-\alpha/2}}dy_{1}dy_{2}.
\end{eqnarray*}
\vspace{0.3cm}

\subsection{Morrey type spaces}

The Morrey space was defined by Morrey \cite{M} in 1938, which is connected to certain problems in elliptic PDE. Later, the Morrey space was found to have many important applications to the Navier-Stokes equations \cite{K}, the Schr\"{o}dinger equations \cite{RV} and the potential analysis \cite{A1} and \cite{A2}.

Let $1\leq p\leq p_{0}<\infty$. The Morrey space $M^{p_{0}}_{p}$ is defined by the norm
$$\|f\|_{M^{p_{0}}_{p}}:=\sup_{Q}|Q|^{1/p_{0}}\Big(\frac{1}{|Q|}\int_{Q}|f(x)|^{p}dx\Big)^{1/p}<\infty.$$

In 2012, Iida et al. \cite{ISST} introduced the multi-Morrey norm as follow
$$\|(f_{1},f_{2})\|_{\mathcal{M}^{p_{0}}_{\vec{P}}}:
=\sup_{Q}|Q|^{1/p_{0}}\prod_{i=1}^{2}\Big(\frac{1}{|Q|}\int_{Q}|f_{i}(x)|^{p_{i}}dx\Big)^{1/p_{i}}<\infty.$$
They showed that Multi-Morrey norm is strictly smaller than $2-$fold product of the Morrey norms. They also proved that
$$T: \mathcal{M}^{p_{0}}_{\vec{P}}\rightarrow M^{p_{0}}_{p}~\text{and}~I_{\alpha}: \mathcal{M}^{p_{0}}_{\vec{P}}\rightarrow M^{q_{0}}_{q}$$
for some suitable indexes $p_{0},p_{1},p_{2}$ and $q_{0},q$. In this paper, we will consider the boundedness and compactness of the commutators $[\Pi\vec{b},T]$ and $[\Pi\vec{b},I_{\alpha}]$.

\subsection{Main results}

\vspace{0.3cm}

Now we return to our main results.

\begin{theorem}\label{main1}
Let $\vec{P}=(p_{1},p_{2})$, $1<p_{1},p_{2}<\infty$, $0< p\leq p_{0}<\infty$ with $\frac{1}{p}=\frac{1}{p_{1}}+\frac{1}{p_{2}}$.
Suppose that $T$ be a bilinear Calderon-Zygmund operator and $\vec{b}=(b_{1},b_{2})$ with $b_{1},b_{2}\in {\rm CMO}$, then $[\Pi\vec{b},T]$ is a compact operator from $\mathcal{M}^{p_{0}}_{\vec{P}}$ to $M^{p_{0}}_{q}$.
\end{theorem}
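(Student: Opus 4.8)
The plan is to prove Theorem~\ref{main1} by a standard three-step reduction: first establish the boundedness of $[\Pi\vec{b},T]$ from $\mathcal{M}^{p_{0}}_{\vec{P}}$ to $M^{p_{0}}_{q}$ for \emph{all} $b_1,b_2\in {\rm BMO}$, then reduce the compactness question to the case where $b_1,b_2\in C^{\infty}_c(\mathbb{R}^n)$ by a density argument, and finally verify compactness directly when the symbols are smooth and compactly supported via the Fréchet--Kolmogorov criterion for precompactness in $M^{p_{0}}_q$. For the boundedness step, I would follow the scheme of Iida et al.\ \cite{ISST} combined with the pointwise sparse/maximal-function domination of the iterated commutator $[\Pi\vec{b},T]$ (P\'erez--Torres \cite{PT}); the crucial tool is that a multi-Morrey norm controls the relevant weighted averages, so that the $L^r\times L^r\to L^r$ estimate upgrades to the Morrey scale just as in the linear theory. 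For the density step, note that if $b_1^{(k)}\to b_1$ and $b_2^{(k)}\to b_2$ in ${\rm BMO}$ with $b_i^{(k)}\in C^{\infty}_c$, then expanding
\begin{align*}
[\Pi\vec{b},T]-[\Pi\vec{b}^{(k)},T]
&=[(b_1-b_1^{(k)},b_2),T]+[(b_1^{(k)},b_2-b_2^{(k)}),T]
\end{align*}
and applying the boundedness estimate shows $[\Pi\vec{b}^{(k)},T]\to[\Pi\vec{b},T]$ in operator norm; since a norm limit of compact operators is compact, it suffices to treat smooth compactly supported symbols.

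For the final and main step, I would fix $b_1,b_2\in C^{\infty}_c(\mathbb{R}^n)$, let $E$ be the unit ball of $\mathcal{M}^{p_{0}}_{\vec{P}}$, and show that $\{[\Pi\vec{b},T](f_1,f_2):(f_1,f_2)\in E\}$ is precompact in $M^{p_{0}}_q$. I would use the characterization of precompact sets in Morrey spaces (cf.\ \cite{CD1}, \cite{CDW1}): a bounded subset is precompact if (i) it is uniformly bounded, (ii) it is uniformly "small at infinity" in the Morrey sense, i.e.\ $\sup_Q$ over cubes far from the origin of the localized norm tends to $0$ uniformly, and (iii) it is uniformly equicontinuous under translations, $\|g(\cdot+h)-g(\cdot)\|_{M^{p_0}_q}\to 0$ as $|h|\to0$, uniformly over the family. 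Condition (i) is the boundedness already proved. For (ii), since $b_1,b_2$ are supported in some ball $B(0,R)$, the commutator kernel $(b_1(x)-b_1(y_1))(b_2(x)-b_2(y_2))K(x,y_1,y_2)$ vanishes unless $x$ or one of the $y_i$ lies in $B(0,R)$; combining this support restriction with the size bound $|K|\lesssim(|x-y_1|+|x-y_2|+|y_1-y_2|)^{-2n}$ and Hölder's inequality on the multi-Morrey norm gives decay of the localized norm on cubes $Q$ with ${\rm dist}(Q,0)$ large. For (iii), I would split $[\Pi\vec{b},T](f_1,f_2)(x+h)-[\Pi\vec{b},T](f_1,f_2)(x)$ according to whether the $b_i$-differences or the kernel $K$ is being perturbed: the terms where a factor $b_i(x+h)-b_i(x)$ appears are handled using $\|\nabla b_i\|_\infty<\infty$, while the terms involving $K(x+h,y_1,y_2)-K(x,y_1,y_2)$ are estimated through the Hörmander-type regularity condition (2), treating separately the region near the singularity (where one uses a crude size bound and shrinks the measure) and the region away from it (where the $|h|^\gamma$ gain applies). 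Each resulting piece is then dominated by a bilinear maximal/fractional-type operator acting on $(f_1,f_2)$ with a small multiplicative constant, and the Morrey boundedness of that operator closes the estimate.

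I expect the main obstacle to be the equicontinuity estimate (iii), specifically controlling the contribution from the region close to the diagonal singularity of $K$: one cannot use the regularity condition there, so the argument must instead exploit that this region has small measure (of size comparable to a power of $|h|$) and absorb the resulting operator into a bilinear fractional integral or bilinear maximal function that maps $\mathcal{M}^{p_{0}}_{\vec{P}}$ into $M^{p_0}_q$ with a constant that vanishes as $|h|\to 0$. A secondary technical point is verifying that the precompactness criterion in $M^{p_0}_q$ is actually valid in the quasi-Banach range $0<q<1$ (which occurs here since $1/p=1/p_1+1/p_2$ can force $p<1$), but this is handled exactly as in the linear Morrey compactness literature. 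Once these three conditions are checked for smooth compactly supported symbols, the density reduction completes the proof.
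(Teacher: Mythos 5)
Your proposal follows essentially the same route as the paper: boundedness on the Morrey scale for general ${\rm BMO}$ symbols (the paper's Lemma \ref{lem1}), reduction to $b_1,b_2\in C^{\infty}_c$ by bilinearity in the symbols and operator-norm convergence, and verification of the three Fréchet--Kolmogorov-type conditions of Lemma \ref{lem6}, with the translation-continuity estimate handled exactly as you describe --- kernel regularity with a $|t|^{\gamma}$ gain away from the diagonal, smallness of measure plus maximal-operator domination near it, and $\|\nabla b_i\|_{\infty}$ for the symbol differences. The only cosmetic difference is that the paper proves the boundedness lemma by a direct $f_i=f_i^{0}+f_i^{\infty}$ decomposition rather than sparse/maximal domination, which does not change the argument.
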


\begin{theorem}\label{main2}
Let $0<\alpha<2n, \vec{P}=(p_{1},p_{2}), 1<p_{1},p_{2}<\infty, 0<p\leq p_{0}<\infty, 0<q\leq q_{0}<\infty$ such that $\frac{1}{p}=\frac{1}{p_{1}}+\frac{1}{p_{2}}$,
$\frac{1}{q_{0}}=\frac{1}{p_{0}}-\frac{\alpha}{n}$ and $\frac{1}{q}=\frac{1}{p}-\frac{\alpha}{n}$. For the local integral functions $b_{1},b_{2}$ and $\vec{b}=(b_{1},b_{2})$, we have
\begin{enumerate}
\item [\rm(1)] if $b_{1},b_{2}\in {\rm CMO}$, then $[\Pi\vec{b},I_{\alpha}]$ is a compact operator from $\mathcal{M}^{p_{0}}_{\vec{P}}$ to $M^{q_{0}}_{q}$.
\item [\rm(2)] if $b_{1}=b_{2}$ and $[\Pi\vec{b},I_{\alpha}]$ is a compact operator from $\mathcal{M}^{p_{0}}_{\vec{P}}$ to $M^{q_{0}}_{q}$, then $b_{1},b_{2}\in {\rm CMO}$.
\end{enumerate}
\end{theorem}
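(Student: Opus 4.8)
\textbf{Proof proposal for Theorem \ref{main2}.}

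\emph{Part (1): sufficiency.} The plan is to follow the now-standard three-step compactness scheme adapted to the multi-Morrey setting. Since $C^\infty_c(\mathbb{R}^n)$ is dense in ${\rm CMO}$, and since a uniform-in-$\vec b$ operator-norm bound $\|[\Pi\vec b,I_\alpha]\|_{\mathcal{M}^{p_0}_{\vec P}\to M^{q_0}_q}\lesssim \|b_1\|_{\rm BMO}\|b_2\|_{\rm BMO}$ holds (this is the boundedness half, which I would prove first by the same Morrey–Hölder arguments used by Iida et al.\ for $I_\alpha$ itself, splitting $b_i(x)-b_i(y_i)$ against a fixed cube and using the $\log$-growth of BMO averages over dilates), the set of compact operators is closed in operator norm; hence it suffices to prove compactness when $b_1,b_2\in C^\infty_c$. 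Fix such $b_1,b_2$ supported in a ball. One then shows the image of the unit ball of $\mathcal{M}^{p_0}_{\vec P}$ is precompact in $M^{q_0}_q$ by verifying the Morrey-space analogue of the Fréchet–Kolmogorov criterion: (i) uniform boundedness (already have it); (ii) uniform decay of $\|[\Pi\vec b,I_\alpha](f_1,f_2)\|$ on $M^{q_0}_q$-norm restricted to cubes far from the support of $b_1,b_2$ — this is immediate because the kernel factor $(b_1(x)-b_1(y_1))(b_2(x)-b_2(y_2))$ forces $y_1,y_2$ or $x$ into the support of the $b_i$, and the fractional kernel then decays; (iii) equicontinuity of translates, $\|[\Pi\vec b,I_\alpha](f_1,f_2)(\cdot+h)-[\Pi\vec b,I_\alpha](f_1,f_2)(\cdot)\|_{M^{q_0}_q}\to 0$ as $|h|\to0$ uniformly over the unit ball. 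For (iii) one splits the double integral into a near-diagonal part $(|x-y_1|^2+|x-y_2|^2)^{1/2}<\delta$, controlled by smallness of $\delta$ together with the smoothness (Lipschitz bound) of $b_1,b_2$ and the local integrability of the fractional kernel, and a far part on which $|h|$-smoothness of the kernel and of the $b_i$ gives a gain of $|h|^\gamma/\delta^{\gamma}$ type; optimizing $\delta$ against $|h|$ closes the estimate. Throughout, each $L^p$-type estimate must be upgraded to an $M^{q_0}_q$ estimate over an arbitrary cube $Q$ by the standard trick of splitting $f_i=f_i\chi_{2Q}+f_i\chi_{(2Q)^c}$ and summing the annular contributions, which is where the multi-Morrey norm (rather than a product of Morrey norms) is genuinely exploited, exactly as in Iida–Sato–Sawano–Tanaka.

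\emph{Part (2): necessity.} Here $b_1=b_2=:b$ and we assume $[\Pi\vec b,I_\alpha]\colon \mathcal{M}^{p_0}_{\vec P}\to M^{q_0}_q$ is compact; we must deduce $b\in{\rm CMO}$. The plan mirrors the Lebesgue-space argument of Chaffee–Torres and its iterated analogue in \cite{WZC}, transplanted to Morrey spaces. First, compactness implies boundedness, and one shows boundedness forces $b\in{\rm BMO}$: testing $[\Pi\vec b,I_\alpha]$ against normalized bumps $f_1=f_2$ supported near a cube $Q$ and using the lower bound on $|I_\alpha|$ for nonnegative inputs — more precisely, choosing, for a fixed cube $Q$, a point $z$ far from $Q$ at distance comparable to $\ell(Q)$ so that on the relevant region the kernel $(|x-y_1|^2+|x-y_2|^2)^{-(n-\alpha/2)}$ is essentially constant and the sign of $(b(x)-b(y_1))(b(x)-b(y_2))$ is controlled — yields $\frac{1}{|Q|}\int_Q|b-\langle b\rangle_Q|\lesssim \|[\Pi\vec b,I_\alpha]\|^{1/2}$ after taking square roots (the square appears because both factors carry $b$). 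To get ${\rm CMO}$, recall ${\rm CMO}$ is characterized inside ${\rm BMO}$ by the three vanishing conditions of Uchiyama: the mean oscillation over $Q$ tends to $0$ as (a) $\ell(Q)\to0$, (b) $\ell(Q)\to\infty$, and (c) $Q\to\infty$ (center escaping to infinity with $\ell(Q)$ fixed). If one of these fails, there is $\varepsilon_0>0$ and a sequence of cubes $Q_k$ — shrinking, expanding, or escaping — with $\frac{1}{|Q_k|}\int_{Q_k}|b-\langle b\rangle_{Q_k}|\ge\varepsilon_0$. Build associated test pairs $f_1^{(k)}=f_2^{(k)}$, each a suitably normalized (unit $\mathcal{M}^{p_0}_{\vec P}$-norm) function adapted to $Q_k$ so that, by the lower-bound computation above, $\|[\Pi\vec b,I_\alpha](f_1^{(k)},f_2^{(k)})\|_{M^{q_0}_q}\gtrsim\varepsilon_0$ \emph{and} the images are ``spread apart'': in the shrinking/expanding cases by a scaling/self-similarity argument the images, after the natural dilation, are mutually far in $M^{q_0}_q$, and in the escaping case by disjointness of supports one gets $\|[\Pi\vec b,I_\alpha](f^{(k)})-[\Pi\vec b,I_\alpha](f^{(j)})\|_{M^{q_0}_q}\gtrsim\varepsilon_0$ for $k\ne j$. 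Such a sequence has no $M^{q_0}_q$-convergent subsequence, contradicting compactness. Hence all three conditions hold and $b\in{\rm CMO}$.

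\emph{Main obstacle.} The routine boundedness/Fréchet–Kolmogorov parts are laborious but standard; the genuinely delicate point is Part (2), specifically engineering the test functions so that normalization is in the \emph{multi-Morrey} norm $\mathcal{M}^{p_0}_{\vec P}$ (strictly smaller than the product norm) while still extracting the lower bound $\gtrsim\varepsilon_0$ for the image and simultaneously guaranteeing the ``separation'' of the images in $M^{q_0}_q$ — the scaling bookkeeping relating $\ell(Q_k)$, the normalization constants, and the exponents $p_0,q_0,q$ via $\tfrac1{q_0}=\tfrac1{p_0}-\tfrac\alpha n$ must be carried out carefully, since a naive choice that works for Lebesgue spaces can fail to be norm-bounded in the Morrey scale. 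A secondary subtlety is that the lower bound for $|I_\alpha(f_1,f_2)|$ with the equivalent kernel $(|x-y_1|^2+|x-y_2|^2)^{-(n-\alpha/2)}$ needs both inputs nonnegative and supported so that the two quadratic terms are comparable, which constrains where the test cubes and their ``far'' reference regions may be placed; reconciling this with the escaping-cube scenario in condition (c) requires the reference region to escape along with $Q_k$, which I would handle by a translation-covariance observation for $I_\alpha$.
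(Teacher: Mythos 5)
Your proposal follows essentially the same route as the paper: for (1), boundedness in the BMO scale plus density of $C^\infty_c$ in ${\rm CMO}$ and the Morrey-space Fr\'echet--Kolmogorov criterion (the paper's Lemma \ref{lem6}), with the translate estimate handled by a near/far kernel splitting (the paper uses a smooth truncation $I_\alpha^\delta$ with $\delta=|t|^{1/2}$, which is the same optimization you describe); for (2), boundedness implies ${\rm BMO}$, then Uchiyama's three conditions and cube-adapted test functions with multi-Morrey normalization --- exactly the content of the paper's Lemma \ref{lem5}, whose exponents $|Q|^{(\lambda_i-1)/p_i}$ with $\lambda_1/p_1+\lambda_2/p_2=1/p-1/p_0$ realize the ``scaling bookkeeping'' you correctly single out as the delicate point. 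The only divergence is the ${\rm BMO}$-necessity sub-step, where the paper uses the Chaffee--Torres device of expanding $(|y_1|^2+|y_2|^2)^{n-\alpha/2}$ in an absolutely convergent Fourier series away from the origin to separate variables, rather than your direct ``kernel essentially constant plus sign control'' argument; both are standard and lead to the same conclusion.
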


\section{Main lemmas}
To prove Theorem \ref{main1} and Theorem \ref{main2}, we need the following results.

\begin{lemma}\label{lem1}
Let $1\leq p<p_{0}<\infty$, $1<p_{1},p_{2}<\infty$ with $\frac{1}{p}=\frac{1}{p_{1}}+\frac{1}{p_{2}}$.
Suppose that $T$ be a bilinear Calderon-Zygmund operator and $\vec{b}=(b_{1},b_{2})$ with $b_{1},b_{2}\in {\rm BMO}$, then
$$\|[\Pi\vec{b},T]\|_{M^{p_{0}}_{p}}\lesssim \|b_{1}\|_{\rm BMO}\|b_{2}\|_{\rm BMO}\|(f_{1},f_{2})\|_{\mathcal{M}^{p_{0}}_{\vec{P}}}.$$
\end{lemma}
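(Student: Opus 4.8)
The plan is to prove the boundedness of $[\Pi\vec{b},T]$ from $\mathcal{M}^{p_0}_{\vec{P}}$ to $M^{p_0}_p$ by a Calder\'on-Zygmund-type local decomposition argument, reducing to known pointwise and $L^p$ estimates. Fix a cube $Q$ and let $2Q$ denote its double. Following the standard commutator paradigm, I would first subtract off the averages: for $j=1,2$ write $b_j = (b_j - (b_j)_{2Q}) + (b_j)_{2Q}$, where $(b_j)_{2Q}$ is the average of $b_j$ over $2Q$. Expanding the product $(b_1(x)-b_1(y_1))(b_2(x)-b_2(y_2))$ inside the kernel via this splitting produces four terms: the main term involving $(b_1(x)-(b_1)_{2Q})(b_2(x)-(b_2)_{2Q})T(\cdot,\cdot)$, two cross terms of the form $(b_j(x)-(b_j)_{2Q})[b_k,T]$-type pieces, and one term that is essentially $T((b_1-(b_1)_{2Q})f_1,(b_2-(b_2)_{2Q})f_2)$. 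Then split each $f_i = f_i \chi_{2Q} + f_i \chi_{(2Q)^c} =: f_i^0 + f_i^\infty$, giving the familiar four-fold ``local/global'' decomposition for each application of $T$.

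The key steps in order: (1) For the local-local pieces (all inputs restricted to $2Q$), I use the known strong-type bound $[\Pi\vec{b},T]\colon L^{p_1}\times L^{p_2}\to L^p$ with operator norm controlled by $\|b_1\|_{\mathrm{BMO}}\|b_2\|_{\mathrm{BMO}}$ (P\'erez-Torres), together with the John-Nirenberg inequality to absorb the $(b_j(x)-(b_j)_{2Q})$ factors on $2Q$; averaging over $Q$ and taking the supremum yields the multi-Morrey bound, using that $\|f_i\chi_{2Q}\|_{L^{p_i}} \lesssim |2Q|^{1/p_i - 1/p_0}\cdot(\text{Morrey-type local average})$ and crucially exploiting the multi-Morrey norm to handle the \emph{product} $\prod_i (\tfrac{1}{|Q|}\int_Q |f_i|^{p_i})^{1/p_i}$ rather than individual Morrey norms. (2) For the global pieces, where at least one $f_i^\infty$ appears, I exploit the kernel size/regularity bounds: on $x\in Q$ and $y_i \in (2Q)^c$, the kernel is pointwise dominated by a sum over dyadic annuli $2^{k+1}Q \setminus 2^k Q$, and the BMO factors contribute logarithmic growth $k\|b_j\|_{\mathrm{BMO}}$ per annulus via the standard estimate $|(b_j)_{2^k Q}-(b_j)_{2Q}| \lesssim k\|b_j\|_{\mathrm{BMO}}$. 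I then estimate $\int_{2^{k+1}Q}|f_i|$ by H\"older and the Morrey norm, summing the resulting geometric-times-polynomial series in $k$ (the kernel decay $2^{-2nk}$ beats the $k^2$ from the two BMO factors). (3) Combine all sixteen-ish terms.

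The main obstacle I anticipate is bookkeeping the interaction between the \emph{product structure} of the multi-Morrey norm and the decomposition: the cross terms and global terms naturally produce sums of products of local averages of $f_1$ and $f_2$ over possibly \emph{different} dyadic scales (e.g. $f_1$ on $2^k Q$ and $f_2$ on $2Q$), and to close the estimate one must re-expand each such average back up to the common scale $2^k Q$, paying a volume factor $(|2^k Q|/|2Q|)^{1/p_i - 1/p_0}$, so that everything is controlled by $\|(f_1,f_2)\|_{\mathcal{M}^{p_0}_{\vec{P}}}$ times a summable series. I would handle this by always estimating $\int_{2^j Q}|f_i|^{p_i}\,dx \le |2^j Q| \cdot (|2^j Q|)^{-1}\int_{2^j Q}|f_i|^{p_i} \le |2^j Q|^{1 - p_i/p_0}\|f_i\|_{M^{p_0}_{p_i}}^{p_i}$ \emph{only as a last resort}, and wherever two inputs sit on the same cube use the multi-Morrey norm directly; a careful choice ensures the powers of $2$ still sum. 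This is exactly the point where the hypothesis $p < p_0$ (strict) is used, since it gives a strictly negative exponent $1/p - 1/p_0 < 0$ making the annular series converge. The regularity condition on $K$ is not needed here (only size), since no smoothness of $b_j$ or cancellation beyond BMO is exploited for mere boundedness; it will be the compactness arguments in Theorems~\ref{main1} and~\ref{main2} that invoke the full kernel regularity and the density of $C_c^\infty$ in CMO.
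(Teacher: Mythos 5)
Your proposal follows essentially the same route as the paper: fix a cube $Q$, split each $f_i$ into its part on $2Q$ and on $(2Q)^{c}$, handle the local--local piece by the P\'erez--Torres $L^{p_1}\times L^{p_2}\to L^{p}$ bound combined with the multi-Morrey norm of $(f_1,f_2)$ on $2Q$, and handle the remaining pieces by subtracting the averages $b_{i,2Q}$ and using only the kernel size condition, the estimate $|b_{2^{k}Q}-b_{2Q}|\lesssim k\|b\|_{\rm BMO}$, and summation over dyadic annuli. One small correction: the convergence of the annular series is driven by the geometric factor $|2^{k}Q|^{-1/p_{0}}$ (i.e.\ by $p_{0}<\infty$, since the kernel decay $2^{-2nk}$ against the volume growth leaves exactly $2^{-kn/p_{0}}$ times powers of $k$), not by the strict inequality $1/p-1/p_{0}<0$ as you suggest.
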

\begin{proof}
Without loss of generality, we may assume that $\|b_{1}\|_{\rm BMO}=\|b_{2}\|_{\rm BMO}=1$. Fixing $Q:=Q(x_{0},r)$, we split $f_{i}$ into $f^{0}_{i}+f^{\infty}_{i}$ with $f^{0}_{i}=f_{i}\chi_{2Q}$ and $f^{\infty}_{i}=f_{i}\chi_{(2Q)^{c}}$, $i=1,2$. Then we need to verify the following inequalities:
\begin{equation}\label{main1.1}
{\rm I_{1}}:=|Q|^{1/p_{0}-1/p}\bigg(\int_{Q}\Big|[\Pi\vec{b},T](f^{0}_{1},f^{0}_{2})(x)\Big|^{p}dx\bigg)^{1/p}
\lesssim \|(f_{1},f_{2})\|_{\mathcal{M}^{p_{0}}_{\vec{P}}};
\end{equation}
\begin{equation}\label{main1.2}
{\rm I_{2}}:=|Q|^{1/p_{0}-1/p}\bigg(\int_{Q}\Big|[\Pi\vec{b},T](f^{0}_{1},f^{\infty}_{2})(x)\Big|^{p}dx\bigg)^{1/p}
\lesssim \|(f_{1},f_{2})\|_{\mathcal{M}^{p_{0}}_{\vec{P}}};
\end{equation}
\begin{equation}\label{main1.3}
{\rm I_{3}}:=|Q|^{1/p_{0}-1/p}\bigg(\int_{Q}\Big|[\Pi\vec{b},T](f^{\infty}_{1},f^{0}_{2})(x)\Big|^{p}dx\bigg)^{1/p}
\lesssim \|(f_{1},f_{2})\|_{\mathcal{M}^{p_{0}}_{\vec{P}}};
\end{equation}
\begin{equation}\label{main1.4}
{\rm I_{4}}:=|Q|^{1/p_{0}-1/p}\bigg(\int_{Q}\Big|[\Pi\vec{b},T](f^{\infty}_{1},f^{\infty}_{2})(x)\Big|^{p}dx\bigg)^{1/p}
\lesssim \|(f_{1},f_{2})\|_{\mathcal{M}^{p_{0}}_{\vec{P}}};
\end{equation}

We analyze each term separately. First, we give the proof of Eq. (\ref{main1.1}). The boundedness of $[\Pi\vec{b},T]$ from $L^{p_{1}}\times L^{p_{2}}$ to $L^{p}$ gives
\begin{eqnarray*}
{\rm I_{1}}&\lesssim& |Q|^{1/p_{0}-1/p}\prod_{i=1}^{2}\bigg(\int_{\mathbb{R}^n}|f^{0}_{i}(x)|^{p_{i}}dx\bigg)^{1/p_{i}}\\
&=& |Q|^{1/p_{0}-1/p}\prod_{i=1}^{2}\bigg(\int_{2Q}|f_{i}(x)|^{p_{i}}dx\bigg)^{1/p_{i}}\\
&\lesssim& \|(f_{1},f_{2})\|_{\mathcal{M}^{p_{0}}_{\vec{P}}}.
\end{eqnarray*}

To estimate ${\rm I_{2}}$, the operator $[\Pi\vec{b}, T]$ can be devided into the following parts:
\begin{eqnarray*}
&&[\Pi\vec{b}, T](f^{0}_{1},f^{\infty}_{2})(x)\\
&&=(b_{1}(x)-b_{1,2Q})(b_{2}(x)-b_{2,2Q})T(f^{0}_{1},f^{\infty}_{2})(x)+(b_{2}(x)-b_{2,2Q})T\big((b_{1,2Q}-b_{1})f^{0}_{1},f^{\infty}_{2}\big)(x)\\
&&\qquad+(b_{1}(x)-b_{1,2Q})T\big(f^{0}_{1},(b_{2,2Q}-b_{2})f^{\infty}_{2}\big)(x)+T\big((b_{1,2Q}-b_{1})f^{0}_{1},(b_{2,2Q}-b_{2})f^{\infty}_{2}\big)(x)\\
&&=:{\rm I_{21}+I_{22}+I_{23}+I_{24}},
\end{eqnarray*}
where $b_{i,2Q}=\frac{1}{|2Q|}\int_{2Q}b_{i}(x)dx$ for $i=1,2$.

Now, we give the estimates for ${\rm I_{21}, I_{22}, I_{23}, I_{24}},$ respectively. By the definition of $T$, we have
\begin{eqnarray*}
|T(f^{0}_{1},f^{\infty}_{1})(x)|&\lesssim& \int_{2Q}\int_{\mathbb{R}^n\backslash 2Q}\frac{|f_{1}(y_{1})||f_{2}(y_{2})|}{(|x-y_{1}|+|x-y_{2}|)^{2n}}dy_{2}dy_{1}\\
&\lesssim&\int_{2Q}|f_{1}(y_{1})|dy_{1}\sum_{k=1}^{\infty}\frac{1}{|2^{k}Q|^{2}}\int_{2^{k+1}Q\backslash2^{k}Q}|f_{2}(y_{2})|dy_{2}\\
&\lesssim&\sum_{k=1}^{\infty}|2^{k+1}Q|^{-1/p_{0}}\|(f_{1},f_{2})\|_{\mathcal{M}^{p_{0}}_{\vec{P}}}\\
&\lesssim&|Q|^{-1/p_{0}}\|(f_{1},f_{2})\|_{\mathcal{M}^{p_{0}}_{\vec{P}}}.
\end{eqnarray*}
Similar estimate gives
\begin{eqnarray*}
&&|T((b_{1}-b_{1,2Q})f^{0}_{1},f^{\infty}_{2})(x)|\\
&&\lesssim \int_{2Q}\int_{\mathbb{R}^n\backslash 2Q}\frac{|b_{1}(y_{1})-b_{1,2Q}||f_{1}(y_{1})||f_{2}(y_{2})|}{(|x-y_{1}|+|x-y_{2}|)^{2n}}dy_{2}dy_{1}\\
&&\lesssim \int_{2Q}|b_{1}(y_{1})-b_{1,2Q}||f_{1}(y_{1})|dy_{1}\sum_{k=1}^{\infty}\frac{1}{|2^{k}Q|^{2}}\int_{2^{k+1}Q\backslash2^{k}Q}|f_{2}(y_{2})|dy_{2}\\
&&\lesssim \bigg(\frac{1}{|2Q|}\int_{2Q}|b_{1}(y_{1})-b_{1,2Q}|^{p'_{1}}dy_{1}\bigg)^{1/p'_{1}}\sum_{k=1}^{\infty}|2^{k+1}Q|^{-1/p_{0}}\|(f_{1},f_{2})\|_{\mathcal{M}^{p_{0}}_{\vec{P}}}\\
&&\lesssim|Q|^{-1/p_{0}}\|(f_{1},f_{2})\|_{\mathcal{M}^{p_{0}}_{\vec{P}}}.
\end{eqnarray*}

From the fact that for $b\in {\rm BMO}$,
$$|b_{2^{k}Q}-b_{Q}|\lesssim k\|b\|_{\rm BMO},$$
which implies that
\begin{eqnarray*}
&&|T(f^{0}_{1},(b_{2}-b_{2,2Q})f^{\infty}_{2})(x)|\\
&&\lesssim \int_{2Q}\int_{\mathbb{R}^n\backslash 2Q}\frac{|b_{2}(y_{2})-b_{2,2Q}||f_{1}(y_{1})||f_{2}(y_{2})|}{(|x-y_{1}|+|x-y_{2}|)^{2n}}dy_{2}dy_{1}\\
&&\lesssim \int_{2Q}|f_{1}(y_{1})|dy_{1}\sum_{k=1}^{\infty}\frac{1}{|2^{k}Q|^{2}}\int_{2^{k+1}Q\backslash2^{k}Q}|b_{2}(y_{2})-b_{2,2Q}||f_{2}(y_{2})|dy_{2}\\
&&\lesssim \int_{2Q}|f_{1}(y_{1})|dy_{1}\sum_{k=1}^{\infty}\frac{1}{|2^{k}Q|^{2}}\int_{2^{k+1}Q}\big(|b_{2}(y_{2})-b_{2,2^{k+1}Q}|+|b_{2,2^{k+1}Q}-b_{2,2Q}|\big)|f_{2}(y_{2})|dy_{2}\\
&&\lesssim \sum_{k=1}^{\infty}k|2^{k+1}Q|^{-1/p_{0}}\|(f_{1},f_{2})\|_{\mathcal{M}^{p_{0}}_{\vec{P}}}\\
&&\lesssim|Q|^{-1/p_{0}}\|(f_{1},f_{2})\|_{\mathcal{M}^{p_{0}}_{\vec{P}}}.
\end{eqnarray*}

Finally, it remains to prove
$$|T((b_{1}-b_{1,2Q})f^{0}_{1},(b_{2}-b_{2,2Q})f^{\infty}_{2})(x)|\lesssim |Q|^{-1/p_{0}}\|(f_{1},f_{2})\|_{\mathcal{M}^{p_{0}}_{\vec{P}}}.$$
Note that
\begin{eqnarray*}
&&|T((b_{1}-b_{1,2Q})f^{0}_{1},(b_{2}-b_{2,2Q})f^{\infty}_{2})(x)|\\
&&\lesssim \int_{2Q}\int_{\mathbb{R}^n\backslash 2Q}\frac{|b_{1}(y_{1})-b_{1,2Q}||b_{2}(y_{2})-b_{2,2Q}||f_{1}(y_{1})||f_{2}(y_{2})|}{(|x-y_{1}|+|x-y_{2}|)^{2n}}dy_{2}dy_{1}\\
&&\lesssim \int_{2Q}|b_{1}(y_{1})-b_{1,2Q}||f_{1}(y_{1})|dy_{1}\sum_{k=1}^{\infty}\frac{1}{|2^{k}Q|^{2}}\int_{2^{k+1}Q\backslash2^{k}Q}|b_{2}(y_{2})-b_{2,2Q}||f_{2}(y_{2})|dy_{2}\\
&&\lesssim \sum_{k=1}^{\infty}k|2^{k+1}Q|^{-1/p_{0}}\|(f_{1},f_{2})\|_{\mathcal{M}^{p_{0}}_{\vec{P}}}\\
&&\lesssim|Q|^{-1/p_{0}}\|(f_{1},f_{2})\|_{\mathcal{M}^{p_{0}}_{\vec{P}}}.
\end{eqnarray*}
Thus, Minkowski's inequality and H\"{o}lder's inequality give that
\begin{eqnarray*}
{\rm I_{2}}&=&|Q|^{1/p_{0}-1/p}\bigg(\int_{Q}|{\rm I_{21}+I_{22}+I_{23}+I_{24}}|^{p}\bigg)^{1/p}\\
&\lesssim&\|(f_{1},f_{2})\|_{\mathcal{M}^{p_{0}}_{\vec{P}}}.
\end{eqnarray*}
We complete the proof of (\ref{main1.2}).

With the same idea of estimate for $I_{2}$, we can obtain the similar result for $I_{3}$.

To prove (\ref{main1.4}), we need only to show the following four inequalities.
\begin{equation}\label{main1.5}
|T(f^{\infty}_{1},f^{\infty}_{2})(x)|\lesssim |Q|^{-1/p_{0}}\|(f_{1},f_{2})\|_{\mathcal{M}^{p_{0}}_{\vec{P}}}
\end{equation}
\begin{equation}\label{main1.6}
|T((b_{1}-b_{1,2Q})f^{\infty}_{1},f^{\infty}_{2})(x)|\lesssim |Q|^{-1/p_{0}}\|(f_{1},f_{2})\|_{\mathcal{M}^{p_{0}}_{\vec{P}}}
\end{equation}
\begin{equation}\label{main1.7}
|T(f^{\infty}_{1},(b_{2}-b_{2,2Q})f^{\infty}_{2})(x)|\lesssim |Q|^{-1/p_{0}}\|(f_{1},f_{2})\|_{\mathcal{M}^{p_{0}}_{\vec{P}}}
\end{equation}
\begin{equation}\label{main1.8}
|T((b_{1}-b_{1,2Q})f^{\infty}_{1},(b_{2}-b_{2,2Q})f^{\infty}_{2})(x)|\lesssim |Q|^{-1/p_{0}}\|(f_{1},f_{2})\|_{\mathcal{M}^{p_{0}}_{\vec{P}}}
\end{equation}

Because (\ref{main1.6}), (\ref{main1.7}), (\ref{main1.8}) are completely analogous to (\ref{main1.5}), with a small difference, we only estimate (\ref{main1.5}). Set $\Omega_{0}:=\{(y_{1},y_{2}):|x-y_{1}|+|x-y_{2}|\leq 2r\}$ and $\Omega_{k}:=\{(y_{1},y_{2}): 2^{k}r< |x-y_{1}|+|x-y_{2}|\leq2^{k+1}r\}$, $k=1,2,\cdots$. Then
\begin{eqnarray*}
|T(f^{\infty}_{1},f^{\infty}_{2})(x)|
&\lesssim&\int_{\mathbb{R}^n\backslash 2Q}\int_{\mathbb{R}^n\backslash 2Q}\frac{|f_{1}(y_{1})||f_{2}(y_{2})|}{(|x-y_{1}|+|x-y_{2}|)^{2n}}dy_{1}dy_{2}\\
&\lesssim&\iint_{\mathbb{R}^{2n}\backslash \Omega_{0}}\frac{|f_{1}(y_{1})||f_{2}(y_{2})|}{(|x-y_{1}|+|x-y_{2}|)^{2n}}dy_{1}dy_{2}\\
&\lesssim&\sum_{k=1}^{\infty}\iint_{\Omega_{k}}\frac{|f_{1}(y_{1})||f_{2}(y_{2})|}{(|x-y_{1}|+|x-y_{2}|)^{2n}}dy_{1}dy_{2}\\
&\lesssim&\sum_{k=1}^{\infty}\frac{1}{(2^{k}r)^{2n}}\prod_{i=1}^{2}\int_{2^{k+1}B}|f_{i}(y_{i})|dy_{i}\\
&\lesssim&\sum_{k=1}^{\infty}|2^{k}Q|^{-1/p_{0}}\|(f_{1},f_{2})\|_{\mathcal{M}^{p_{0}}_{\vec{P}}}\\
&\lesssim&|Q|^{-1/p_{0}}\|(f_{1},f_{2})\|_{\mathcal{M}^{p_{0}}_{\vec{P}}},
\end{eqnarray*}
where $B:=B(x,r)$.

Combining the estimates above, we have
$$\|[\Pi\vec{b},T](f_{1},f_{2})\|_{M^{p_{0}}_{p}}\lesssim \|(f_{1},f_{2})\|_{\mathcal{M}^{p_{0}}_{\vec{P}}}.$$
We complete the proof of Lemma \ref{lem1}.
\end{proof}

Now, we give the $M^{p_{0}}_{p}-$boundedness for a general bi-sublinear operator which satisfies some control conditions.
\begin{lemma}\label{lem2}
Let $S$ is a bi-sublinear operator satisfies
$$|S(f_{1},f_{2})(x)|\lesssim \int_{\mathbb{R}^n}\int_{\mathbb{R}^n}\frac{|f_{1}(y_{1})f_{2}(y_{2})|}{(|x-y_{1}|+|x-y_{2}|)^{2n}}dy_{1}dy_{2},$$
and for $1\leq p<\infty$, $1<p_{1},p_{2}<\infty$ with $\frac{1}{p}=\frac{1}{p_{1}}+\frac{1}{p_{2}}$, $S$ is bounded from $L^{p_{1}}\times L^{p_{2}}$ to $L^{p}$. Then for $1\leq p<p_{0}<\infty$ and $\vec{P}=(p_{1},p_{2})$, $S$ is bounded from $\mathcal{M}^{p_{0}}_{\vec{P}}$ to $M^{p_{0}}_{p}$; that is
$$\|S(f_{1},f_{2})\|_{M^{p_{0}}_{p}}\lesssim \|(f_{1},f_{2})\|_{\mathcal{M}^{p_{0}}_{\vec{P}}}.$$
\end{lemma}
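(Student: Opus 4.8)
The plan is to mimic the structure of the proof of Lemma \ref{lem1}, but now exploiting only the two hypotheses on $S$: the pointwise size estimate by the bilinear fractional-type kernel of order $0$ and the off-diagonal $L^{p_1}\times L^{p_2}\to L^p$ bound. Fix a cube $Q=Q(x_0,r)$ and decompose each $f_i=f_i^0+f_i^\infty$ with $f_i^0=f_i\chi_{2Q}$ and $f_i^\infty=f_i\chi_{(2Q)^c}$. By sublinearity $|S(f_1,f_2)|\le |S(f_1^0,f_2^0)|+|S(f_1^0,f_2^\infty)|+|S(f_1^\infty,f_2^0)|+|S(f_1^\infty,f_2^\infty)|$, so it suffices to bound
$$|Q|^{1/p_0-1/p}\Big(\int_Q |S(f_1^{j_1},f_2^{j_2})(x)|^p\,dx\Big)^{1/p}\lesssim \|(f_1,f_2)\|_{\mathcal{M}^{p_0}_{\vec P}}$$
for each of the four choices $(j_1,j_2)\in\{0,\infty\}^2$, and then take the supremum over $Q$.

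For the local-local term $(0,0)$ I would simply invoke the $L^{p_1}\times L^{p_2}\to L^p$ boundedness of $S$ exactly as in the first displayed chain in the proof of Lemma \ref{lem1}: the left side is $\lesssim |Q|^{1/p_0-1/p}\prod_{i=1}^2\big(\int_{2Q}|f_i|^{p_i}\big)^{1/p_i}$, and by definition of the multi-Morrey norm and $|2Q|\sim|Q|$ this is $\lesssim \|(f_1,f_2)\|_{\mathcal{M}^{p_0}_{\vec P}}$. For the three remaining terms, which involve at least one far piece $f_i^\infty$, I would use the pointwise kernel bound to get, for $x\in Q$, a pointwise estimate $|S(f_1^{j_1},f_2^{j_2})(x)|\lesssim |Q|^{-1/p_0}\|(f_1,f_2)\|_{\mathcal{M}^{p_0}_{\vec P}}$ uniformly in $x\in Q$. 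This is carried out by the annular decomposition already used in the excerpt: for the mixed terms one writes $\int_{2Q}|f_1(y_1)|\,dy_1\cdot\sum_{k\ge1}|2^kQ|^{-2}\int_{2^{k+1}Q\setminus 2^kQ}|f_2(y_2)|\,dy_2$, and for the far-far term one sets $\Omega_k=\{(y_1,y_2):2^kr<|x-y_1|+|x-y_2|\le 2^{k+1}r\}$ and sums $\sum_{k\ge1}(2^kr)^{-2n}\prod_i\int_{2^{k+1}B}|f_i(y_i)|\,dy_i$; in each case Hölder's inequality against $L^{p_i}(2^{k+1}Q)$ yields a factor $|2^{k+1}Q|^{1/p_i}$, the product of which combines with the geometric weight to give a convergent series $\sum_k |2^kQ|^{-1/p_0}\lesssim |Q|^{-1/p_0}$ (using $1/p=1/p_1+1/p_2>1/p_0$, i.e. $p<p_0$). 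Once the pointwise bound holds, $\big(\int_Q |S(\cdot)|^p\big)^{1/p}\lesssim |Q|^{1/p}|Q|^{-1/p_0}\|(f_1,f_2)\|_{\mathcal{M}^{p_0}_{\vec P}}$, and multiplying by $|Q|^{1/p_0-1/p}$ closes the estimate.

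The only genuine point requiring care — the "main obstacle," though it is mild — is the convergence of the geometric series in the far-type estimates, which is exactly where the strict inequality $p<p_0$ is used: each annulus contributes $|2^kQ|^{-1/p_0}$ after applying the multi-Morrey bound, and $\sum_{k\ge1}2^{-kn/p_0}<\infty$. Everything else is a routine repetition of the computations displayed in Lemma \ref{lem1}, with the BMO factors and the weight $|b_i-b_{i,2Q}|$ simply absent. I would therefore write the proof concisely: handle $(0,0)$ by the $L^p$ bound, state that the three terms with a far factor are all handled identically and carry out only the far-far case $S(f_1^\infty,f_2^\infty)$ in detail (reproducing the $\Omega_k$ computation), and then assemble the four pieces and take the supremum over $Q$ to conclude $\|S(f_1,f_2)\|_{M^{p_0}_p}\lesssim \|(f_1,f_2)\|_{\mathcal{M}^{p_0}_{\vec P}}$.
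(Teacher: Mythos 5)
Your proposal is correct and follows essentially the same route as the paper: the only cosmetic difference is that the paper decomposes each $f_i$ into the full dyadic sum $\sum_{k\ge 0} f_i^k$ and bounds a double series $\sum_j\sum_k$, whereas you split only into $f_i^0+f_i^\infty$ and perform the annular decomposition inside the integral, exactly as in the proof of Lemma \ref{lem1}; both reduce to the same pointwise bound $\lesssim |2^kQ|^{-1/p_0}\|(f_1,f_2)\|_{\mathcal{M}^{p_0}_{\vec P}}$ per annulus. One small correction: the convergence of $\sum_k 2^{-kn/p_0}$ needs only $p_0<\infty$, not the strict inequality $p<p_0$ you cite.
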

\begin{proof}
Fixing $Q:=Q(x_{0},r)$ and we write
$$f_{i}=f_{i}\chi_{2Q}+\sum_{k=1}^{\infty}f_{i}\chi_{2^{k+1}Q\backslash2^{k}Q}=:\sum_{k=0}^{\infty}f^{k}_{i}.$$

Then the $L^{p}-$boundedness of $S$ yields
\begin{eqnarray*}
|Q|^{1/p_{0}-1/p}\bigg(\int_{Q}|S(f^{0}_{1},f^{0}_{2})(x)|^{p}dx\bigg)^{1/p}&\lesssim&|Q|^{1/p_{0}-1/p}\|f^{0}_{1}\|_{L^{p_{1}}}\|f^{0}_{2}\|_{L^{p_{2}}}\\
&\lesssim& \|(f_{1},f_{2})\|_{\mathcal{M}^{p_{0}}_{\vec{P}}}.
\end{eqnarray*}
To complete the proof of Lemma \ref{lem2}, it remains to show the following four inequalities.
\begin{equation}\label{3.2.1}
\sum_{k=1}^{\infty}\bigg(\int_{Q}|S(f^{0}_{1},f^{k}_{2})(x)|^{p}dx\bigg)^{1/p}\lesssim |Q|^{1/p-1/p_{0}}\|(f_{1},f_{2})\|_{\mathcal{M}^{p_{0}}_{\vec{P}}};
\end{equation}
\begin{equation}\label{3.2.2}
\sum_{j=1}^{\infty}\bigg(\int_{Q}|S(f^{j}_{1},f^{0}_{2})(x)|^{p}dx\bigg)^{1/p}\lesssim |Q|^{1/p-1/p_{0}}\|(f_{1},f_{2})\|_{\mathcal{M}^{p_{0}}_{\vec{P}}};
\end{equation}
\begin{equation}\label{3.2.3}
\sum_{j=1}^{\infty}\sum_{k=1}^{j}\bigg(\int_{Q}|S(f^{j}_{1},f^{k}_{2})(x)|^{p}dx\bigg)^{1/p}\lesssim |Q|^{1/p-1/p_{0}}\|(f_{1},f_{2})\|_{\mathcal{M}^{p_{0}}_{\vec{P}}};
\end{equation}
\begin{equation}\label{3.2.4}
\sum_{j=1}^{\infty}\sum_{k=j}^{\infty}\bigg(\int_{Q}|S(f^{j}_{1},f^{k}_{2})(x)|^{p}dx\bigg)^{1/p}\lesssim |Q|^{1/p-1/p_{0}}\|(f_{1},f_{2})\|_{\mathcal{M}^{p_{0}}_{\vec{P}}};
\end{equation}

By the symmetry, we need only to prove (\ref{3.2.1}) and (\ref{3.2.3}). First, we give the proof of (\ref{3.2.1}). Note that
\begin{eqnarray*}
|S(f^{0}_{1},f^{k}_{2})(x)|&\lesssim&\int_{2Q}\int_{2^{k+1}Q\backslash 2^{k}Q}\frac{|f_{1}(y_{1})||f_{2}(y_{2})|}{(|x-y_{1}|+|x-y_{2}|)^{2n}}dy_{1}dy_{2}\\
&\lesssim& |2^{k}Q|^{-2}\int_{2^{k+1}Q}\int_{2^{k+1}Q}|f_{1}(y_{1})||f_{2}(y_{2})|dy_{1}dy_{2}\\
&\lesssim&|2^{k}Q|^{-1/p_{0}}\|(f_{1},f_{2})\|_{\mathcal{M}^{p_{0}}_{\vec{P}}},
\end{eqnarray*}
which implies that
\begin{eqnarray*}
\sum_{k=1}^{\infty}\bigg(\int_{Q}|S(f^{0}_{1},f^{k}_{2})(x)|^{p}dx\bigg)^{1/p}
&\lesssim&|Q|^{1/p}\sum_{k=1}^{\infty}|2^{k}Q|^{-1/p_{0}}\|(f_{1},f_{2})\|_{\mathcal{M}^{p_{0}}_{\vec{P}}}\\
&\lesssim& |Q|^{1/p-1/p_{0}}\|(f_{1},f_{2})\|_{\mathcal{M}^{p_{0}}_{\vec{P}}}.
\end{eqnarray*}

For $j\geq k$, we also have
\begin{eqnarray*}
S(f^{j}_{1},f^{k}_{2})(x)&\lesssim&|2^{j}Q|^{-1/p_{0}}\|(f_{1},f_{2})\|_{\mathcal{M}^{p_{0}}_{\vec{P}}}.
\end{eqnarray*}
Thus,
\begin{eqnarray*}
\sum_{j=1}^{\infty}\sum_{k=1}^{j}\bigg(\int_{Q}|S(f^{j}_{1},f^{k}_{2})(x)|^{p}dx\bigg)^{1/p}
&\lesssim&|Q|^{1/p}\sum_{j=1}^{\infty}j|2^{j}Q|^{-1/p_{0}}\|(f_{1},f_{2})\|_{\mathcal{M}^{p_{0}}_{\vec{P}}}\\
&\lesssim& |Q|^{1/p-1/p_{0}}\|(f_{1},f_{2})\|_{\mathcal{M}^{p_{0}}_{\vec{P}}}.
\end{eqnarray*}
Thus, we complete the proof of the Lemma \ref{lem2}.
\end{proof}

Since bilinear maximal Calderon-Zygmund operator $T_{*}$ satisfies the condition as in Lemma \ref{lem2}, we get immediately the sharp bounds for $T_{*}$ on Morrey spaces.
\begin{corollary}\label{c}
Let $\vec{P}=(p_{1},p_{2}), 1\leq p<p_{0}<\infty$, $1<p_{1},p_{2}<\infty$ with $\frac{1}{p}=\frac{1}{p_{1}}+\frac{1}{p_{2}}$.
Suppose that $T_{*}$ be a bilinear maximal Calderon-Zygmund operator, then
$$\|T_{*}(f_{1},f_{2})\|_{M^{p_{0}}_{p}}\lesssim \|(f_{1},f_{2})\|_{\mathcal{M}^{p_{0}}_{\vec{P}}}.$$
\end{corollary}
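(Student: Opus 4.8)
The plan is to deduce Corollary~\ref{c} directly from Lemma~\ref{lem2}, so the only thing to verify is that the bilinear maximal Calder\'{o}n--Zygmund operator $T_{*}$ meets the two hypotheses of that lemma. First I would check the pointwise size control. For any $\delta>0$, the truncated integral defining $T_{*}$ is, by the size condition (1) on the kernel $K$, bounded in absolute value by
$$\int_{|x-y_{1}|+|x-y_{2}|>\delta}\frac{|f_{1}(y_{1})f_{2}(y_{2})|}{(|x-y_{1}|+|x-y_{2}|)^{2n}}\,dy_{1}dy_{2}
\le\int_{\mathbb{R}^{n}}\int_{\mathbb{R}^{n}}\frac{|f_{1}(y_{1})f_{2}(y_{2})|}{(|x-y_{1}|+|x-y_{2}|)^{2n}}\,dy_{1}dy_{2},$$
uniformly in $\delta$, so taking the supremum over $\delta>0$ gives exactly the majorization required in Lemma~\ref{lem2}.

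Second I would invoke the $L^{p_{1}}\times L^{p_{2}}\to L^{p}$ boundedness of $T_{*}$, which is precisely the result of Grafakos and Torres recalled in the excerpt: for $1<p_{1},p_{2}<\infty$ with $\frac{1}{p}=\frac{1}{p_{1}}+\frac{1}{p_{2}}$ one has $T_{*}:L^{p_{1}}\times L^{p_{2}}\to L^{p}$. (One should also note that $T_{*}$ is bi-sublinear, which is immediate from the subadditivity of the supremum and of the absolute value in each argument separately.) With both hypotheses of Lemma~\ref{lem2} in hand, its conclusion applies verbatim with $S=T_{*}$, yielding
$$\|T_{*}(f_{1},f_{2})\|_{M^{p_{0}}_{p}}\lesssim\|(f_{1},f_{2})\|_{\mathcal{M}^{p_{0}}_{\vec{P}}}$$
for $1\le p<p_{0}<\infty$, which is the assertion of the corollary.

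There is really no substantive obstacle here; the work has already been done in Lemma~\ref{lem2}, and this corollary is a packaging statement. The only minor points worth spelling out are that the bound in the size control is genuinely uniform in the truncation parameter $\delta$ (so that passing to the supremum is legitimate) and that the $L^{p}$ endpoint case $p=1$ (arising when, e.g., $p_{1}=p_{2}=2$) is still covered by the Grafakos--Torres strong-type bound, since we only need the strong-type estimate on the range $1<p_{1},p_{2}<\infty$ and never the weak endpoint. Hence the proof is a two-line verification followed by an appeal to Lemma~\ref{lem2}.
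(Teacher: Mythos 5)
Your proposal is correct and matches the paper exactly: the paper offers no separate proof, stating only that $T_{*}$ satisfies the conditions of Lemma~\ref{lem2} (the pointwise size majorization, uniform in the truncation parameter, plus the Grafakos--Torres $L^{p_{1}}\times L^{p_{2}}\to L^{p}$ bound) so the corollary follows immediately. Your write-up simply makes those two verifications explicit.
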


\begin{lemma}\label{lem3}
Under the hypotheses of Theorem \ref{main2}. For the local integral functions $b_{1},b_{2}$ and $\vec{b}=(b_{1},b_{2})$, we have
\begin{enumerate}
\item [\rm(1)] if $b_{1},b_{2}\in {\rm BMO}$, then $[\Pi\vec{b},I_{\alpha}]$ is a bounded operator from $\mathcal{M}^{p_{0}}_{\vec{P}}$ to $M^{q_{0}}_{q}$.
\item [\rm(2)] if $b_{1}=b_{2}$ and $[\Pi\vec{b},I_{\alpha}]$ is a bounded operator from $\mathcal{M}^{p_{0}}_{\vec{P}}$ to $M^{q_{0}}_{q}$, then $b_{1},b_{2}\in {\rm BMO}$.
\end{enumerate}
\end{lemma}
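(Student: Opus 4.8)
\medskip

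\textbf{Part (1): sufficiency.} The plan is to follow the same scheme already used in Lemma \ref{lem1}, replacing the Calder\'{o}n--Zygmund kernel bound by the fractional one $\big(|x-y_1|^2+|x-y_2|^2\big)^{-(n-\alpha/2)}\approx\big(|x-y_1|+|x-y_2|\big)^{-(2n-\alpha)}$. Fix a cube $Q=Q(x_0,r)$ and split $f_i=f_i^0+f_i^\infty$ with $f_i^0=f_i\chi_{2Q}$. The ``local'' piece $[\Pi\vec b,I_\alpha](f_1^0,f_2^0)$ is handled by the known boundedness $[\Pi\vec b,I_\alpha]\colon L^{p_1}\times L^{p_2}\to L^q$ (P\'erez--Torres-type estimate with $b_i\in{\rm BMO}$), together with the dimensional bookkeeping $|Q|^{1/q_0-1/q}\prod_i(\int_{2Q}|f_i|^{p_i})^{1/p_i}\lesssim\|(f_1,f_2)\|_{\mathcal M^{p_0}_{\vec P}}$, which uses $\tfrac1{q_0}-\tfrac1q=\tfrac1{p_0}-\tfrac1p$. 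For the three mixed pieces and the ``global'' piece, expand the commutator as in Lemma \ref{lem1} into four terms by inserting the averages $b_{i,2Q}$; each term is estimated by a dyadic annular decomposition $2^{k+1}Q\setminus 2^kQ$. The new feature is that the geometric factor is now $|2^kQ|^{-1+\alpha/(2n)}$ in each variable, so after H\"older's inequality (using $p_i'$ to absorb the BMO factors $\tfrac1{|2^kQ|}\int_{2^kQ}|b_i-b_{i,2^{k+1}Q}|^{p_i'}\lesssim 1$ and the John--Nirenberg growth $|b_{i,2^kQ}-b_{i,2Q}|\lesssim k$) one arrives at a convergent sum $\sum_k k^2\,|2^kQ|^{-1/q_0}\lesssim|Q|^{-1/q_0}$. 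Collecting the four local/mixed/global estimates and taking $L^q(Q)$ norms via Minkowski gives $\|[\Pi\vec b,I_\alpha](f_1,f_2)\|_{M^{q_0}_q}\lesssim\|b_1\|_{\rm BMO}\|b_2\|_{\rm BMO}\|(f_1,f_2)\|_{\mathcal M^{p_0}_{\vec P}}$.

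\medskip

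\textbf{Part (2): necessity.} Assume $b_1=b_2=b$ and that $[\Pi\vec b,I_\alpha]\colon\mathcal M^{p_0}_{\vec P}\to M^{q_0}_q$ is bounded; the goal is $b\in{\rm BMO}$. First I would reduce to a single cube: fix $Q=Q(x_0,r)$ and test the operator against $f_1=f_2=\chi_{Q}$ (or, to extract the sign of $b$, against $\chi_{Q}$ and a suitable translate of $Q$ as in Chanillo's argument). On a far-away companion cube $\widetilde Q=Q(\tilde x_0,r)$ with $|\tilde x_0-x_0|\approx r$, for $x\in\widetilde Q$ and $y_1,y_2\in Q$ the kernel $\big(|x-y_1|^2+|x-y_2|^2\big)^{-(n-\alpha/2)}\approx r^{-(2n-\alpha)}$, so
\[
\big|[\Pi\vec b,I_\alpha](\chi_Q,\chi_Q)(x)\big|\gtrsim r^{-(2n-\alpha)}\int_Q\int_Q |b(x)-b(y_1)|\,|b(x)-b(y_2)|\,dy_1dy_2
= r^{\alpha}\,\Big(\tfrac1{|Q|}\int_Q|b(x)-b(y)|\,dy\Big)^2.
\]
Hence $\big|[\Pi\vec b,I_\alpha](\chi_Q,\chi_Q)(x)\big|\gtrsim r^{\alpha}\,|b(x)-b_Q|^2$ for $x\in\widetilde Q$ after discarding the part of $Q$ near the median of $b$. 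Computing the Morrey norm from below by restricting the supremum to $\widetilde Q$, and computing $\|(\chi_Q,\chi_Q)\|_{\mathcal M^{p_0}_{\vec P}}$ from above ($\approx |Q|^{1/p_0}$), the boundedness hypothesis yields
\[
|Q|^{1/q_0}\Big(\tfrac1{|Q|}\int_{\widetilde Q} r^{\alpha q}\,|b(x)-b_Q|^{2q}\,dx\Big)^{1/q}\lesssim |Q|^{1/p_0},
\]
and after cancelling powers of $|Q|$ (using $\tfrac1{q_0}=\tfrac1{p_0}-\tfrac\alpha n$, $\tfrac1q=\tfrac1p-\tfrac\alpha n$, and $|\widetilde Q|=|Q|$) this collapses to $\big(\tfrac1{|Q|}\int_{\widetilde Q}|b-b_Q|^{2q}\big)^{1/2q}\lesssim 1$, hence $\tfrac1{|Q|}\int_{Q}|b-b_Q|\lesssim 1$ uniformly in $Q$, i.e. $b\in{\rm BMO}$.

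\medskip

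\textbf{Main obstacle.} The routine direction (1) is bookkeeping; the delicate point in (2) is producing the pointwise lower bound with the correct power of $r$ while genuinely controlling, rather than merely discarding, the diagonal contribution where $b(y_i)$ is close to $b(x)$. The standard fix is to replace $b$ by $b-c$ for the median $c=m_b(Q)$, so that $|\{y\in Q: b(y)\ge c\}|\ge |Q|/2$ and $|\{y\in Q: b(y)\le c\}|\ge|Q|/2$; choosing the companion cube $\widetilde Q$ on the side where $b(x)-c$ has a fixed sign lets one bound the double integral $\int_Q\int_Q(b(x)-b(y_1))(b(x)-b(y_2))$ below by $c\,|Q|^2|b(x)-c|^2$ without cancellation. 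Care is also needed that $\chi_Q\in\mathcal M^{p_0}_{\vec P}$ with norm comparable to $|Q|^{1/p_0}$ and that the supremum defining $\|\cdot\|_{M^{q_0}_q}$ may legitimately be restricted to the single cube $\widetilde Q$; both are immediate from the definitions but must be stated.
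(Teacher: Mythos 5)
Your Part (1) follows the paper's proof essentially verbatim: split $f_i=f_i^0+f_i^\infty$ relative to $2Q$, use the known $L^{p_1}\times L^{p_2}\to L^q$ boundedness of $[\Pi\vec b,I_\alpha]$ for the local piece, and expand the commutator through the averages $b_{i,2Q}$ with dyadic annuli, H\"older, and the John--Nirenberg growth $|b_{i,2^kQ}-b_{i,2Q}|\lesssim k$ for the rest. That part is fine.

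Part (2) is where you diverge from the paper, and where there is a genuine gap. Your central display,
$\big|[\Pi\vec b,I_\alpha](\chi_Q,\chi_Q)(x)\big|\gtrsim r^{-(2n-\alpha)}\int_Q\int_Q|b(x)-b(y_1)|\,|b(x)-b(y_2)|\,dy_1dy_2$,
is false as stated: the integrand $(b(x)-b(y_1))(b(x)-b(y_2))K(x,y_1,y_2)$ changes sign, and since the kernel is only \emph{comparable} to $r^{-(2n-\alpha)}$ (not equal to it), you cannot factor the double integral into the square $\big(\int_Q(b(x)-b(y))dy\big)^2$ nor pull absolute values inside. You are aware of this and propose a median repair, but the mechanism you describe --- ``choosing the companion cube $\widetilde Q$ on the side where $b(x)-c$ has a fixed sign'' --- is not available: the sign of $b(x)-c$ varies with $x\in\widetilde Q$ and cannot be controlled by a geometric choice of $\widetilde Q$. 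The correct version of that repair restricts the \emph{supports of the test functions} to the level sets $E_\pm=\{y\in Q:\pm(b(y)-c)\ge 0\}$ (each of measure $\ge|Q|/2$), tests against $\chi_{E_-}$ on $\widetilde Q\cap\{b\ge c\}$ and against $\chi_{E_+}$ on $\widetilde Q\cap\{b\le c\}$, where the integrand is pointwise nonnegative and $\ge(b(x)-c)^2$ so that comparability of the kernel suffices; one then still has to pass from $\frac{1}{|Q|}\int_{\widetilde Q}|b-c|^{2q}\lesssim1$ (oscillation over $\widetilde Q$ against the median over $Q$) to the BMO condition. This can all be made to work, but none of it is in your write-up.

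The paper avoids the sign issue entirely by the Janson--Chaffee--Torres device: since the ball $\mathbb B=B((z_0,z_0),\sqrt{2n})$ avoids the origin, $(|y_1|^2+|y_2|^2)^{n-\alpha/2}$ has an absolutely convergent Fourier expansion $\sum_ja_je^{iv_j\cdot(y_1,y_2)}$ there, and inserting $1=K\cdot K^{-1}$ converts $\frac{1}{|Q|}\int_Q|b-b_{Q'}|^2$ exactly into $\sum_j|a_j|\int_Q|[\Pi\vec b,I_\alpha](g_j,h_j)\,m_j|$ with modulated characteristic functions $g_j,h_j$ of the translated cube $Q'$ (whose $\mathcal M^{p_0}_{\vec P}$ norm is $|Q|^{1/p_0}$), after which the boundedness hypothesis gives the BMO bound with no case analysis. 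If you want to keep your more elementary testing argument, you must replace the displayed inequality by the level-set construction above; otherwise adopt the Fourier expansion.
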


\begin{proof}
Assume that $b_{1},b_{2}\in {\rm BMO}$. For any cube $Q$, we split $f_{i}$ into $f^{0}_{i}+f^{\infty}_{i}$ with $f^{0}_{i}=f_{i}\chi_{2Q}$ and $f^{\infty}_{i}=f_{i}\chi_{(2Q)^{c}}$, $i=1,2$. Then we need to verify the following inequalities:
\begin{equation}\label{3.3.1}
{\rm J_{1}}:=|Q|^{1/q_{0}-1/q}\bigg(\int_{Q}\Big|[\Pi\vec{b},I_{\alpha}](f^{0}_{1},f^{0}_{2})(x)\Big|^{q}dx\bigg)^{1/q}\lesssim \|(f_{1},f_{2})\|_{\mathcal{M}^{p_{0}}_{\vec{P}}};
\end{equation}
\begin{equation}\label{3.3.2}
{\rm J_{2}}:=|Q|^{1/q_{0}-1/q}\bigg(\int_{Q}\Big|[\Pi\vec{b},I_{\alpha}](f^{0}_{1},f^{\infty}_{2})(x)\Big|^{q_{0}}dx\bigg)^{1/q_{0}}\lesssim \|(f_{1},f_{2})\|_{\mathcal{M}^{p_{0}}_{\vec{P}}};
\end{equation}
\begin{equation}\label{3.3.3}
{\rm J_{3}}:=|Q|^{1/q_{0}-1/q}\bigg(\int_{Q}\Big|[\Pi\vec{b},I_{\alpha}](f^{\infty}_{1},f^{0}_{2})(x)\Big|^{q_{0}}dx\bigg)^{1/q_{0}}\lesssim \|(f_{1},f_{2})\|_{\mathcal{M}^{p_{0}}_{\vec{P}}};
\end{equation}
\begin{equation}\label{3.3.4}
{\rm J_{4}}:=|Q|^{1/q_{0}-1/q}\bigg(\int_{Q}\Big|[\Pi\vec{b},I_{\alpha}](f^{\infty}_{1},f^{\infty}_{2})(x)\Big|^{q_{0}}dx\bigg)^{1/q_{0}}\lesssim \|(f_{1},f_{2})\|_{\mathcal{M}^{p_{0}}_{\vec{P}}}.
\end{equation}

By the boundedness of $[\Pi\vec{b},I_{\alpha}]$ from $L^{p_{1}}\times L^{p_{2}}$ to $L^{q}$, we have
\begin{eqnarray*}
{\rm J_{1}}&\lesssim& |Q|^{1/q_{0}-1/q}\prod_{i=1}^{2}\bigg(\int_{\mathbb{R}^n}|f^{0}_{i}(x)|^{p_{i}}dx\bigg)^{1/p_{i}}\\
&\lesssim& |Q|^{1/p_{0}-1/p}\prod_{i=1}^{2}\bigg(\int_{2Q}|f_{i}(x)|^{p_{i}}dx\bigg)^{1/p_{i}}\\
&\lesssim& \|(f_{1},f_{2})\|_{\mathcal{M}^{p_{0}}_{\vec{P}}}.
\end{eqnarray*}

The terms ${\rm J_{2}, J_{3}, J_{4}}$ are estimates, with slight changes, using the same tools as in the proof for $[\Pi\vec{b},T]$. For example, if we consider the $J_{2}$ term, we first give the estimates for some operators. First,
\begin{eqnarray*}
|I_{\alpha}(f^{0}_{1},f^{\infty}_{1})(x)|&\lesssim& \int_{2Q}\int_{\mathbb{R}^n\backslash 2Q}\frac{|f_{1}(y_{1})||f_{2}(y_{2})|}{(|x-y_{1}|^{2}+|x-y_{2}|^{2})^{n-\alpha/2}}dy_{2}dy_{1}\\
&\lesssim&\int_{2Q}|f_{1}(y_{1})|dy_{1}\sum_{k=1}^{\infty}\frac{1}{|2^{k}Q|^{2-\alpha/n}}\int_{2^{k+1}Q\backslash2^{k}Q}|f_{2}(y_{2})|dy_{2}\\
&\lesssim&\sum_{k=1}^{\infty}|2^{k+1}Q|^{\alpha/n-1/p_{0}}\|(f_{1},f_{2})\|_{\mathcal{M}^{p_{0}}_{\vec{P}}}\\
&\lesssim&|Q|^{\alpha/n-1/p_{0}}\|(f_{1},f_{2})\|_{\mathcal{M}^{p_{0}}_{\vec{P}}}.
\end{eqnarray*}
Second,
\begin{eqnarray*}
&&|I_{\alpha}((b_{1}-b_{1,2Q})f^{0}_{1},f^{\infty}_{2})(x)|\\
&&\lesssim \int_{2Q}\int_{\mathbb{R}^n\backslash 2Q}\frac{|b_{1}(y_{1})-b_{1,2Q}||f_{1}(y_{1})||f_{2}(y_{2})|}{(|x-y_{1}|^{2}+|x-y_{2}|^{2})^{n-\alpha/2}}dy_{2}dy_{1}\\
&&\lesssim \int_{2Q}|b_{1}(y_{1})-b_{1,2Q}||f_{1}(y_{1})|dy_{1}\sum_{k=1}^{\infty}\frac{1}{|2^{k}Q|^{2-\alpha/n}}\int_{2^{k+1}Q\backslash2^{k}Q}|f_{2}(y_{2})|dy_{2}\\
&&\lesssim  \bigg(\frac{1}{|2Q|}\int_{2Q}|b_{1}(y_{1})-b_{1,2Q}|^{p'_{1}}dy_{1}\bigg)^{1/p'_{1}}\sum_{k=1}^{\infty}|2^{k+1}Q|^{\alpha/n-1/p_{0}}\|(f_{1},f_{2})\|_{\mathcal{M}^{p_{0}}_{\vec{P}}}\\
&&\lesssim \|b_{1}\|_{\rm BMO}|Q|^{\alpha/n-1/p_{0}}\|(f_{1},f_{2})\|_{\mathcal{M}^{p_{0}}_{\vec{P}}}.
\end{eqnarray*}
Third,
\begin{eqnarray*}
&&|I_{\alpha}(f^{0}_{1},(b_{2}-b_{2,2Q})f^{\infty}_{2})(x)|\\
&&\lesssim \int_{2Q}\int_{\mathbb{R}^n\backslash 2Q}\frac{|b_{2}(y_{2})-b_{2,2Q}||f_{1}(y_{1})||f_{2}(y_{2})|}{(|x-y_{1}|^{2}+|x-y_{2}|^{2})^{n-\alpha/2}}dy_{2}dy_{1}\\
&&\lesssim \int_{2Q}|f_{1}(y_{1})|dy_{1}\sum_{k=1}^{\infty}\frac{1}{|2^{k}Q|^{2-\alpha/n}}\int_{2^{k+1}Q\backslash2^{k}Q}|b_{2}(y_{2})-b_{2,2Q}||f_{2}(y_{2})|dy_{2}\\
&&\lesssim \sum_{k=1}^{\infty}|2^{k+1}Q|^{\alpha/n-1/p_{0}} \bigg(\frac{1}{|2^{k+1}Q|}\int_{2^{k+1}Q}|b_{2}(y_{2})-b_{2,2Q}|^{p'_{2}}dy_{2}\bigg)^{1/p'_{2}}\|(f_{1},f_{2})\|_{\mathcal{M}^{p_{0}}_{\vec{P}}}\\
&&\lesssim\|b_{2}\|_{\rm BMO}\sum_{k=1}^{\infty}k|2^{k+1}Q|^{\alpha/n-1/p_{0}}\|(f_{1},f_{2})\|_{\mathcal{M}^{p_{0}}_{\vec{P}}}\\
&&\lesssim \|b_{2}\|_{\rm BMO}|Q|^{\alpha/n-1/p_{0}}\|(f_{1},f_{2})\|_{\mathcal{M}^{p_{0}}_{\vec{P}}}.
\end{eqnarray*}

Finally,
\begin{eqnarray*}
&&|I_{\alpha}((b_{1}-b_{1,2Q})f^{0}_{1},(b_{2}-b_{2,2Q})f^{\infty}_{2})(x)|\\
&&\lesssim \int_{2Q}\int_{\mathbb{R}^n\backslash 2Q}\frac{|b_{1}(y_{1})-b_{1,2Q}||f_{1}(y_{1})||b_{2}(y_{2})-b_{2,2Q}||f_{1}(y_{1})||f_{2}(y_{2})|}{(|x-y_{1}|^{2}+|x-y_{2}|^{2})^{n-\alpha/2}}dy_{2}dy_{1}\\
&&\lesssim \int_{2Q}|b(y_{1})-b_{2Q}||f_{1}(y_{1})|dy_{1}\sum_{k=1}^{\infty}\frac{1}{|2^{k}Q|^{2-\alpha/n}}\int_{2^{k+1}Q\backslash2^{k}Q}|b_{2}(y_{2})-b_{2,2Q}||f_{2}(y_{2})|dy_{2}\\
&&\lesssim  \bigg(\frac{1}{|2Q|}\int_{2Q}|b(y_{1})-b_{2Q}|^{p'_{1}}dy_{1}\bigg)^{1/p'_{1}}\|b_{2}\|_{\rm BMO}\sum_{k=1}^{\infty}k|2^{k+1}Q|^{\alpha/n-1/p_{0}}\|(f_{1},f_{2})\|_{\mathcal{M}^{p_{0}}_{\vec{P}}}\\
&&\lesssim \|b_{1}\|_{\rm BMO}\|b_{2}\|_{\rm BMO}|Q|^{\alpha/n-1/p_{0}}\|(f_{1},f_{2})\|_{\mathcal{M}^{p_{0}}_{\vec{P}}}.
\end{eqnarray*}

Since the operator $[\Pi\vec{b}, I_{\alpha}]$ can be devided into the following parts:
$$[\Pi\vec{b}, I_{\alpha}](f^{0}_{1},f^{\infty}_{2})(x)={\rm J_{21}+J_{22}+J_{23}+J_{24}},$$
where
\begin{eqnarray*}
&&{\rm J_{21}}:=(b_{1}(x)-b_{1,2Q})(b_{2}-b_{2,2Q})I_{\alpha}(f^{0}_{1},f^{\infty}_{2})(x);\\
&&{\rm J_{22}}:=(b_{1}(x)-b_{1,2Q})I_{\alpha}(f^{0}_{1},(b_{2}-b_{2,2Q})f^{\infty}_{2})(x);\\
&&{\rm J_{23}}:=(b_{2}(x)-b_{2,2Q})I_{\alpha}((b_{1}-b_{1,2Q})f^{0}_{1},f^{\infty}_{2})(x);\\
&&{\rm J_{24}}:=I_{\alpha}((b_{1}-b_{1,2Q})f^{0}_{1},(b_{2}-b_{2,2Q})f^{\infty}_{2})(x).
\end{eqnarray*}
This yields
\begin{eqnarray*}
{\rm J_{2}}&\lesssim&|Q|^{1/q_{0}-1/q}\bigg(\int_{Q}|{\rm J_{21}+J_{22}+J_{23}+J_{24}}|^{q}dx\bigg)^{1/q}\\
&\lesssim& \|b_{1}\|_{\rm BMO}\|b_{2}\|_{\rm BMO}\|(f_{1},f_{2})\|_{\mathcal{M}^{p_{0}}_{\vec{P}}}.
\end{eqnarray*}

Combining all the estimates for terms ${\rm J_{1},J_{2},J_{3},J_{4}}$, we get
$$\|[\Pi\vec{b},I_{\alpha}](f_{1},f_{2})\|_{M^{q_{0}}_{q}}\lesssim \|b_{1}\|_{\rm BMO}\|b_{2}\|_{\rm BMO}\|f_{1},f_{2}\|_{\mathcal{M}^{p_{0}}_{\vec{P}}}.$$

\vskip 0.5cm
\noindent
{\it Proof of (2).} Let $z_{0}\in \mathbb{R}^n$ such that $|(z_{0},z_{0})|>2\sqrt{n}$. Take $\mathbb{B}=B\big((z_{0},z_{0}),\sqrt{2n}\big)\subset \mathbb{R}^{2n}$. Since $O\notin \mathbb{B}$, then we can express $(|y_{1}|^{2}+|y_{2}|^{2})^{n-\alpha/2}$ as an absolutely convergent Fourier series of the form
$$(|y_{1}|^{2}+|y_{2}|^{2})^{n-\alpha/2}=\sum_{j}a_{j}e^{iv_{j}\cdot(y_{1},y_{2})}, \quad (y_{1},y_{2})\in B,$$
where $\sum_{j}|a_{j}|<\infty$ and we do not care about the vectors $v_{j}\in \mathbb{R}^{2n},$ but we will at times express them as $v_{j}=(v_{j}^{1},v_{j}^{2})\in \mathbb{R}^{n}\times \mathbb{R}^n.$

Let $Q=Q(x_{0},r)$ be any arbitrary cube in $\mathbb{R}^n$. Set $\tilde{z}=x_{0}+rz_{0}$ and take $Q'=Q(\tilde{z},r)\subset \mathbb{R}^n$. So for any $x\in Q$ and $y_{1},y_{2}\in Q'$, we have
$$\Big|\frac{x-y_{1}}{r}-z_{0}\Big|\leq \Big|\frac{x-x_{0}}{r}\Big|+\Big|\frac{y_{1}-\tilde{z}}{r}\Big|\leq \sqrt{n},
\quad \Big|\frac{x-y_{2}}{r}-z_{0}\Big|\leq \Big|\frac{x-x_{0}}{r}\Big|+\Big|\frac{y_{2}-\tilde{z}}{r}\Big|\leq \sqrt{n},$$
which implies that
$$\bigg(\Big|\frac{x-y_{1}}{r}-z_{0}\Big|^{2}+\Big|\frac{x-y_{2}}{r}-z_{0}\Big|^{2}\bigg)^{1/2}\leq \sqrt{2n};$$
that is, $(\frac{x-y_{1}}{r},\frac{x-y_{2}}{r})\in \mathbb{B}$.

Let $s(x)=\overline{\mathrm{sgn}(\int_{Q'}(b(x)-b(y))dy)}$. We have the following estimate,
\begin{eqnarray*}
&&\frac{1}{|Q|}\int_{Q}|b(x)-b_{Q'}|^{2}dx\\
&&\lesssim \frac{1}{|Q|}\int_{Q}s^{2}(x)(b(x)-b_{Q'})^{2}dx\\
&&\lesssim \frac{1}{|Q||Q'|^{2}}\int_{Q}\int_{Q'}\int_{Q'}s^{2}(x)\big(b(x)-b(y_{1})\big)\big(b(x)-b(y_{2})\big)dy_{1}dy_{2}dx\\
&&\lesssim \frac{r^{2n-\alpha}}{|Q|^{3}}\int_{Q}\int_{Q'}\int_{Q'}\frac{s^{2}(x)\big(b(x)-b(y_{1})\big(b(x)-b(y_{2})\big)\big)}
{\big(|x-y_{1}|^{2}+|x-y_{2}|^{2}\big)^{n-\alpha/2}}\sum_{j}a_{j}e^{iv_{j}\cdot(\frac{x-y_{1}}{r},\frac{x-y_{2}}{r})}dy_{1}dy_{2}dx.
\end{eqnarray*}
Setting
$$g_{j}(y_{1})=e^{-\frac{i}{r}v^{1}_{j}\cdot y_{1}}\chi_{Q'}(y_{1}),$$
$$h_{j}(y_{2})=e^{-\frac{i}{r}v^{2}_{j}\cdot y_{2}}\chi_{Q'}(y_{2}),$$
$$m_{j}(x)=e^{\frac{i}{r}v_{j}\cdot (x,x)}\chi_{Q}(x)s^{2}(x).$$
We have
\begin{eqnarray*}
&&\frac{1}{|Q|}\int_{Q}|b(x)-b_{Q}|^{2}dx\lesssim\frac{1}{|Q|}\int_{Q}|b(x)-b_{Q'}|^{2}dx\\
&&\lesssim \frac{r^{2n-\alpha}\delta^{-2n+\alpha}}{|Q|^{3}}\sum_{j}|a_{j}|\int_{Q}\big|[\Pi\vec{b},I_{\alpha}](g_{j},h_{j})(x)m_{j}(x)\big|dx\\
&&\lesssim \frac{r^{2n-\alpha}}{|Q|^{2+1/q}}\sum_{j}|a_{j}|\bigg(\int_{Q}\big|[\Pi\vec{b},I_{\alpha}](g_{j},h_{j})(x)\big|^{q}dx\bigg)^{1/q}\\
&&\lesssim r^{-\alpha-n/q}\sum_{j}|a_{j}|\|[\Pi\vec{b},I_{\alpha}](g_{j},h_{j})\|_{M^{q_{0}}_{q}}\\
&&\lesssim \|[\Pi\vec{b},I_{\alpha}]\|_{\mathcal{M}^{p_{0}}_{\vec{P}}\rightarrow M^{q_{0}}_{q}}\sum_{j}|a_{j}|.
\end{eqnarray*}
The desired result follows from here.
\end{proof}

As mentioned in the introduction, ${\rm CMO}$ is the closure in ${\rm BMO}$ of the space of $C^{\infty}$ functions with compact support. In \cite{U}, it was shown that ${\rm CMO}$ can be characterized in the following way.
\begin{lemma}(\cite{U})\label{lem4}
Let $b\in {\rm BMO}$. Then $b$ is in ${\rm CMO}$ if and only if

\begin{equation}\label{2.1.1}
\lim_{a\rightarrow 0}\sup_{|Q|=a}\frac{1}{|Q|}\int_{Q}|b(x)-b_{Q}|dx=0;
\end{equation}

\begin{equation}\label{2.1.2}
\lim_{a\rightarrow \infty}\sup_{|Q|=a}\frac{1}{|Q|}\int_{Q}|b(x)-b_{Q}|dx=0;
\end{equation}

\begin{equation}\label{2.1.3}
\lim_{|y|\rightarrow 0}\frac{1}{|Q|}\int_{Q}|b(x+y)-b_{Q}|dx=0,  \ for\ each ~Q.
\end{equation}
\end{lemma}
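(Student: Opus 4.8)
This is Uchiyama's characterization of ${\rm CMO}$, and the plan is to prove the two implications separately: that membership in ${\rm CMO}$ forces \eqref{2.1.1}--\eqref{2.1.3} is the soft direction, while the converse is the substantive one. For necessity I would first check that every $g\in C^{\infty}_{c}(\mathbb{R}^n)$ satisfies the three conditions: uniform continuity of $g$ gives \eqref{2.1.1} and \eqref{2.1.3} (the average oscillation of $g$ over a cube, and its translated analogue, is bounded by the modulus of continuity of $g$ at the relevant scale), while compact support gives \eqref{2.1.2} (over a cube of volume $a$ the oscillation is $O(\|g\|_\infty|{\rm supp}\,g|/a)$). Then, for $b\in{\rm CMO}$ and $g_k\in C^{\infty}_{c}$ with $\|b-g_k\|_{\rm BMO}\to0$, the triangle-type bound
\[
\frac1{|Q|}\int_Q|b-b_Q|\,dx\le\frac1{|Q|}\int_Q|g_k-(g_k)_Q|\,dx+2\|b-g_k\|_{\rm BMO}
\]
(together with its translated version) shows, after taking the relevant supremum over $Q$ and then the limit in the size parameter or in $|y|$, that each of the three quantities for $b$ is $\le 2\|b-g_k\|_{\rm BMO}$; letting $k\to\infty$ yields \eqref{2.1.1}--\eqref{2.1.3}.

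For sufficiency, given $b\in{\rm BMO}$ satisfying \eqref{2.1.1}--\eqref{2.1.3}, the plan is to approximate $b$ in ${\rm BMO}$ by functions in $C^{\infty}_{c}$ built by mollifying and then smoothly truncating. Fix a standard mollifier $\psi$, set $\psi_\varepsilon(x)=\varepsilon^{-n}\psi(x/\varepsilon)$, let $\varphi_R$ be smooth with $\varphi_R\equiv1$ on $B(0,R)$, supported in $B(0,2R)$, and $\|\nabla\varphi_R\|_\infty\lesssim R^{-1}$, and consider $g_{\varepsilon,R}:=\varphi_R\,(b*\psi_\varepsilon)\in C^{\infty}_{c}$. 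From
\[
\|b-g_{\varepsilon,R}\|_{\rm BMO}\le\|b-b*\psi_\varepsilon\|_{\rm BMO}+\|(1-\varphi_R)(b*\psi_\varepsilon)\|_{\rm BMO},
\]
I would show the first term is $o(1)$ as $\varepsilon\to0$ and, for each fixed $\varepsilon$, the second is $o(1)$ as $R\to\infty$, and finish with a diagonal choice $R=R(\varepsilon)\to\infty$. For the mollification term I fix a cube $Q$, estimate the average oscillation of $b-b*\psi_\varepsilon$ over $Q$, and split by $\ell(Q)$ into very small, intermediate, and very large: the small and large ranges follow from \eqref{2.1.1} and \eqref{2.1.2}, using the elementary fact that the average oscillation of $b*\psi_\varepsilon$ over any cube is dominated by a $\psi_\varepsilon$-average of the oscillations of $b$ over translates of that cube (so $b*\psi_\varepsilon$ inherits the small- and large-scale smallness uniformly in $\varepsilon$), while the intermediate range is precisely where \eqref{2.1.3} --- the translation condition, made uniform over a bounded family of cubes --- forces $b*\psi_\varepsilon\to b$. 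For the truncation term I use $\|b*\psi_\varepsilon\|_{\rm BMO}\le\|b\|_{\rm BMO}$ and the standard ``a ${\rm BMO}$ function may be smoothly cut off'' estimate: on cubes sitting inside $B(0,R)$ the function vanishes, on cubes far from $B(0,2R)$ one subtracts the mean over the transition annulus and controls the result by \eqref{2.1.2}--\eqref{2.1.3}, and on cubes meeting ${\rm supp}\,\nabla\varphi_R$ the factor $\|\nabla\varphi_R\|_\infty\,{\rm diam}(Q)\lesssim R^{-1}\,{\rm diam}(Q)$ supplies the gain.

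The main obstacle is the one that always appears here: promoting the ``for each $Q$'' hypothesis \eqref{2.1.3} to an estimate uniform over the relevant family of cubes --- namely the intermediate-scale cubes in the mollification step and the cubes straddling the transition region of $\varphi_R$ in the truncation step. This I would handle by a covering argument on the centers and side lengths of those cubes, together with the observation that translating and mildly dilating a cube perturbs its average oscillation by at most $C\|b\|_{\rm BMO}$ times the relative perturbation. A secondary technical point used throughout is that $f\mapsto\eta f$ is bounded on ${\rm BMO}$ for a bounded Lipschitz cutoff $\eta$, with a trackable constant. With these in hand, the order of quantifiers --- first the small-scale threshold, then the large-scale threshold, then $\varepsilon$ small, then $R=R(\varepsilon)$ large --- produces the $C^{\infty}_{c}$ approximants and hence $b\in{\rm CMO}$.
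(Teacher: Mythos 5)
The paper offers no proof of this lemma at all --- it is quoted verbatim from Uchiyama \cite{U} --- so the only question is whether your argument establishes the stated equivalence. Before assessing it, note that the statement as printed contains a misprint which your proof inherits: in \eqref{2.1.3} the limit should be $|y|\to\infty$, with the mean taken over the translated cube (i.e.\ $\lim_{|y|\to\infty}\frac{1}{|Q|}\int_Q|b(x+y)-b_{Q+y}|\,dx=0$ for each $Q$); the third condition measures the oscillation of $b$ on a fixed-size cube escaping to infinity. As literally printed, with $|y|\to0$, the left-hand side tends to $\frac1{|Q|}\int_Q|b-b_Q|\,dx$ and the condition would force $b$ to be constant; under your charitable reading of it as small-translation continuity it carries no information about the behaviour of $b$ at infinity, and that information is indispensable.

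With your reading the equivalence you set out to prove is in fact false, so the sufficiency direction must break, and it breaks at the truncation step. Take $b=\sum_{k\ge1}\phi(\cdot-2^k e_1)$ with $\phi\in C^\infty_c$ a fixed nonconstant bump supported in the unit cube: $b$ is bounded and globally Lipschitz, so \eqref{2.1.1} and uniform translation continuity hold, and \eqref{2.1.2} holds because a cube of side $L$ meets only $O(\log L)$ bumps; yet $b\notin{\rm CMO}$, since the unit cubes centred at $2^ke_1$ carry a fixed positive oscillation while any $g\in C^\infty_c$ eventually vanishes on them. For this $b$ your quantity $\|(1-\varphi_R)(b*\psi_\varepsilon)\|_{\rm BMO}$ does not tend to $0$ as $R\to\infty$: the truncated function still contains infinitely many untouched bumps. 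The step ``on cubes far from $B(0,2R)$ one subtracts the mean over the transition annulus and controls the result by \eqref{2.1.2}--\eqref{2.1.3}'' is precisely where the missing hypothesis is needed --- \eqref{2.1.2} says nothing about unit-scale cubes, and translation continuity says nothing about cubes far from the origin. The same defect infects your proposed covering argument upgrading ``for each $Q$'' to uniformity over the intermediate-scale cubes in the mollification step: those cubes have unbounded centres, so no compactness is available. Both gaps are exactly what the correctly stated condition \eqref{2.1.3} (with $|y|\to\infty$) repairs, and with it your mollify-and-truncate scheme is the standard route and does go through. The necessity direction is also fine once \eqref{2.1.3} is read correctly, except that it is compact support, not uniform continuity, that yields \eqref{2.1.3} for $g\in C^\infty_c$.
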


\begin{lemma}\label{lem5}
Support that $b\in {\rm BMO}$ with $\|b\|_{*}=1$. If for some $0<\epsilon<1$ and a cube $Q$ with its center at $x_{Q}$ and $r_{Q}$, $b$ is not a constant on cube $Q$ and satisfies
$$\frac{1}{|Q|}\int_{Q}|b(y)-b_{Q}|dy>\epsilon^{1/2},$$
then for the function $f_{i}(i=1,2)$ defined by
\begin{equation}\label{2.3.0}
f_{i}(y_{i})=|Q|^{(\lambda_{i}-1)/(p_{i})}\big(sgn(b(y_{i})-b_{Q})-c_{0}\big)\chi_{Q}(y_{i}),
\end{equation}
where $c_{0}=|Q|^{-1}\int_{Q}sgn\big(b(y)-b_{Q}\big)dy_{i}$ and $\frac{\lambda_{1}}{p_{1}}+\frac{\lambda_{2}}{p_{2}}=\frac{1}{p}-\frac{1}{p_{0}}$ with $0<\lambda_{i}<1$ for $i=1,2$. There exists constants $\gamma_{1},\gamma_{2},\gamma_{3}$ satisfying $\gamma_{2}>\gamma_{1}>2$ and $\gamma_{3}>0$, such that
\begin{equation}\label{2.3.1}
|Q|^{\frac{1}{q_{0}}-\frac{1}{q}}\bigg(\int_{\gamma_{1}r_{Q}<|x-x_{Q}|<\gamma_{2}r_{Q}}\big|[\Pi\vec{b},I_{\alpha}](f_{1},f_{2})(x)\big|^{q}dx\bigg)^{1/q}\geq \gamma_{3},
\end{equation}
\begin{equation}\label{2.3.2}
|Q|^{\frac{1}{q_{0}}-\frac{1}{q}}\bigg(\int_{|x-x_{Q}|>\gamma_{2}r_{Q}}\big|[\Pi\vec{b},I_{\alpha}](f_{1},f_{2})(x)\big|^{q}dx\bigg)^{1/q}\leq \frac{\gamma_{3}}{4}.
\end{equation}

Moreover, there exists a constant $0<\beta<< \gamma_{2}$ depending only on $p_{1},p_{2},n$ such that  for all measurable subsets $E\subset \big\{x:\gamma_{1}r_{Q}<|x-x_{Q}|<\gamma_{2}r_{Q}\big\}$ satisfying $\frac{|E|}{|Q|}<\beta^{n}$, we have
\begin{equation}\label{2.3.3}
|Q|^{\frac{1}{q_{0}}-\frac{1}{q}}\bigg(\int_{E}\big|[\Pi\vec{b},I_{\alpha}](f_{1},f_{2})(x)\big|^{q}dx\bigg)^{1/q}\leq \frac{\gamma_{3}}{4}.
\end{equation}
\end{lemma}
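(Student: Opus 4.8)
The plan is to estimate $[\Pi\vec b,I_\alpha](f_1,f_2)(x)$ pointwise from below on an annulus centered at $x_Q$ and from above far from $x_Q$, so that the bulk of the $L^q$ mass sits on a controlled annulus. The key observation is that when $x$ is far from $Q$ (say $|x-x_Q|\sim\gamma r_Q$) and $y_1,y_2\in Q$, the kernel $(|x-y_1|^2+|x-y_2|^2)^{-(n-\alpha/2)}$ is essentially constant, of size $(\gamma r_Q)^{-(2n-\alpha)}$. Moreover, for such $x$, since $b$ has mean oscillation bounded by $1$ on $Q$ and satisfies $\frac1{|Q|}\int_Q|b-b_Q|>\epsilon^{1/2}$, one expects $b(x)-b_Q$ to dominate $b(y_i)-b_Q$ for most $x$ (this is where $\gamma_1>2$ is needed: push $x$ out far enough that the oscillation of $b$ on the annulus, which grows only logarithmically, stays comparable to a fixed multiple of its oscillation on $Q$). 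Then
\[
[\Pi\vec b,I_\alpha](f_1,f_2)(x)\approx (b(x)-b_Q)^2\,(\gamma r_Q)^{-(2n-\alpha)}\int_Q f_1(y_1)\,dy_1\int_Q f_2(y_2)\,dy_2,
\]
and the choice \eqref{2.3.0} of $f_i$ as a normalized copy of $\operatorname{sgn}(b-b_Q)-c_0$ forces $\int_Q f_i\,dy_i\gtrsim |Q|^{\lambda_i/p_i}\cdot\frac1{|Q|}\int_Q|b-b_Q|\gtrsim |Q|^{\lambda_i/p_i}\,\epsilon^{1/2}$ by the hypothesis. Integrating $|b(x)-b_Q|^{2q}$ over the annulus $\gamma_1 r_Q<|x-x_Q|<\gamma_2 r_Q$ and using the John--Nirenberg inequality to produce a lower bound on $\big(\frac1{|A|}\int_A|b-b_Q|^{2q}\big)^{1/q}$ (since $b$ is not constant, it has strictly positive oscillation on a suitable dyadic dilate), one arrives at \eqref{2.3.1} with an explicit $\gamma_3$ depending on $\epsilon,\gamma_1,\gamma_2,n,p_1,p_2$, after collecting powers of $r_Q$ via $\frac{\lambda_1}{p_1}+\frac{\lambda_2}{p_2}=\frac1p-\frac1{p_0}$ and $\frac1{q_0}-\frac1q=\frac1{p_0}-\frac1p$, $\frac1q=\frac1p-\frac\alpha n$.

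For \eqref{2.3.2}, I would split the region $|x-x_Q|>\gamma_2 r_Q$ into dyadic annuli $2^k\gamma_2 r_Q<|x-x_Q|<2^{k+1}\gamma_2 r_Q$, $k\ge0$. On each, the kernel is $\lesssim (2^k\gamma_2 r_Q)^{-(2n-\alpha)}$, the factor $|b(x)-b(y_i)|$ is $\lesssim |b(x)-b_{2^k Q'}|+ k$ (logarithmic growth of BMO averages), and $\|f_i\|_1\lesssim |Q|^{\lambda_i/p_i}$. Summing $\int$ of the $(2q)$-th power of the oscillation over the dyadic annulus against the geometric factor $(2^k)^{-(2n-\alpha)q}$, which beats the polynomial growth $k^{2q}\cdot 2^{2nk}$ coming from the oscillation's volume and logarithmic size, yields a convergent series whose tail $\sum_{k\ge1}$ is small; choosing $\gamma_2$ large (i.e. starting the sum at a large scale) makes the whole right-hand side $\le\gamma_3/4$. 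This is the standard "the operator sees only a neighborhood of $Q$" mechanism, transplanted to the multi-Morrey scaling.

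Finally \eqref{2.3.3} follows from absolute continuity of the integral: on the fixed annulus $\gamma_1 r_Q<|x-x_Q|<\gamma_2 r_Q$ the integrand $\big|[\Pi\vec b,I_\alpha](f_1,f_2)\big|^q$ is in $L^1$ with norm $\lesssim\gamma_3^q$ (by the upper half of the argument for \eqref{2.3.1}, now used as an upper bound), so there is $\beta>0$, depending only on $p_1,p_2,n$ through the structural constants, with $\int_E(\cdots)\le(\gamma_3/4)^q$ whenever $|E|/|Q|<\beta^n$; one must check this $\beta$ can be taken uniform in $Q$, which holds because after rescaling $x\mapsto (x-x_Q)/r_Q$ everything reduces to a fixed annulus and the function $b$ only enters through its normalized oscillation, bounded by $1$ and bounded below by $\epsilon^{1/2}$. \textbf{The main obstacle} I anticipate is the lower bound \eqref{2.3.1}: one must rule out cancellation between the two commutator factors $(b(x)-b(y_1))(b(x)-b(y_2))$ and the sign pattern built into $f_1,f_2$, i.e.\ genuinely show that the product $\int_Q f_1\int_Q f_2$ does not vanish and that $(b(x)-b_Q)^2$ is the dominant term — this forces careful, quantitative use of the hypothesis $\frac1{|Q|}\int_Q|b-b_Q|>\epsilon^{1/2}$ together with John--Nirenberg, rather than a soft argument, and is where the constants $\gamma_1,\gamma_2,\gamma_3$ get pinned down.
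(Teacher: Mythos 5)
Your argument for the lower bound \eqref{2.3.1} rests on two claims that are both false, and this is a genuine gap rather than a presentational issue. First, you write that the dominant contribution is $(b(x)-b_Q)^2(\gamma r_Q)^{-(2n-\alpha)}\int_Q f_1\int_Q f_2$ and that the choice \eqref{2.3.0} forces $\int_Q f_i\,dy_i\gtrsim |Q|^{\lambda_i/p_i}\epsilon^{1/2}$. But $f_i$ is constructed precisely so that $\int_Q f_i\,dy_i=0$ (the constant $c_0$ is subtracted exactly to kill the mean); what the hypothesis $\frac{1}{|Q|}\int_Q|b-b_Q|>\epsilon^{1/2}$ controls from below is $\int_Q (b(y_i)-b_Q)f_i(y_i)\,dy_i=|Q|^{(\lambda_i-1)/p_i}\int_Q|b-b_Q|$, not $\int_Q f_i$. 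Second, there is no way to bound $|b(x)-b_Q|$ from below for $x$ in the annulus: BMO and John--Nirenberg give only upper (exponential-integrability) bounds on oscillation, and $b$ could well be constant outside $Q$, making $(b(x)-b_Q)^2$ identically zero there. So the term you single out as dominant is in fact one of the error terms.

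The paper's proof inverts your decomposition. Expanding $b(x)-b(y_i)=(b(x)-b_Q)-(b(y_i)-b_Q)$, the term that is bounded below is $I_\alpha\big((b-b_Q)f_1,(b-b_Q)f_2\big)(x)$: since $(b(y_i)-b_Q)f_i(y_i)\ge 0$ there is no cancellation in the double integral, and the kernel is comparable to $|x-x_Q|^{-2n+\alpha}$ on the annulus, giving the lower bound \eqref{2.3.5} with the factor $\epsilon$ coming from $\prod_i\int_Q|b-b_Q|$. The three remaining terms all carry at least one factor of $f_i$ with its mean-zero property intact, which yields an extra decay $|x-x_Q|^{-1}$ (estimates \eqref{2.3.6}--\eqref{2.3.8}); the accompanying factors $(b(x)-b_Q)$ or $(b(x)-b_Q)^2$ grow only logarithmically in dyadic annuli, so their $L^q$ norms over $\{|x-x_Q|>\nu r_Q\}$ decay like a negative power of $\nu$ with a strictly better exponent than the main term. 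Choosing $\gamma_2\gg\gamma_1$ large then gives \eqref{2.3.1} and \eqref{2.3.2} simultaneously. For \eqref{2.3.3}, your appeal to absolute continuity of the integral is too soft: $\beta$ must be uniform over all cubes $Q$ and all admissible $b$, which requires the quantitative John--Nirenberg bound $\big(\tfrac{1}{|Q|}\int_E|b-b_Q|^{2q}\big)^{1/q}\lesssim \big(\tfrac{|E|}{|Q|}\big)^{1/(2q)}\big(1+\log\tfrac{\tilde C|Q|}{|E|}\big)^{(1+\lfloor 2q\rfloor)/(2q)}$ used in the paper, not just integrability of the integrand on the annulus.
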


\begin{proof}
Since $\int_{Q}\big(b(y)-b_{Q}\big)dy=0,$ it is easy to check that $f_{i}$ satisfies
$$\mathrm{supp} f_{i}\subset Q,$$
$$f_{i}(y_{i})(b(y)-b_{Q})\geq 0,$$
$$\int f_{i}(y_{i})dy_{i}=0,$$
$$|f_{i}(y_{i})|\leq 2|Q|^{(\lambda_{i}-1)/p_{i}},$$
$$\int \big(b(y)-b_{Q}\big)f_{i}(y)dy=|Q|^{(\lambda_{i}-1)/p_{i}}\int_{Q}|b(y_{i})-b_{Q}|dy.$$
Moreover, it is easy to see that $\|(f_{1},f_{2})\|_{\mathcal{M}^{p_{0}}_{p}}\leq C$. For a cube $Q$ with center $x_{Q}$ and $x\in (2\sqrt{n}Q)^{c}$, the following point-wise estimates hold:
\begin{equation}\label{2.3.4}
|I_{\alpha}\big((b-b_{Q})f_{1},(b-b_{Q})f_{2}\big)(x)|\lesssim |Q|^{2-\frac{1}{p_{0}}}|x-x_{Q}|^{-2n+\alpha},
\end{equation}
\begin{equation}\label{2.3.5}
|I_{\alpha}\big((b-b_{Q})f_{1},(b-b_{Q})f_{2}\big)(x)|\gtrsim \epsilon |Q|^{2-\frac{1}{p_{0}}}|x-x_{Q}|^{-2n+\alpha},
\end{equation}
\begin{equation}\label{2.3.6}
|I_{\alpha}\big((b-b_{Q})f_{1},f_{2}\big)(x)|\lesssim |Q|^{2-\frac{1}{p_{0}}+\frac{1}{n}}|x-x_{Q}|^{-2n+\alpha-1},
\end{equation}
\begin{equation}\label{2.3.7}
|I_{\alpha}\big(f_{1},(b-b_{Q})f_{2}\big)(x)|\lesssim |Q|^{2-\frac{1}{p_{0}}+\frac{1}{n}}|x-x_{Q}|^{-2n+\alpha-1},
\end{equation}
\begin{equation}\label{2.3.8}
|I_{\alpha}\big(f_{1},f_{2}\big)(x)|\lesssim |Q|^{2-\frac{1}{p_{0}}+\frac{1}{n}}|x-x_{Q}|^{-2n+\alpha-1},
\end{equation}
where $f_{i}$ as above and the constants involved are independent of $b, f_{i}$ and $\epsilon$.

To prove (\ref{2.3.4}), from the fact that $\|b\|_{*}=1$ and $x\in (2\sqrt{n}Q)^{c}$, we have
\begin{eqnarray*}
&&|I_{\alpha}\big((b-b_{Q})f_{1},(b-b_{Q})f_{2}\big)(x)|\\
&&=\bigg|\int_{\mathbb{R}^n}\int_{\mathbb{R}^n}\frac{(b(y_{1})-b_{Q})(b(y_{2})-b_{Q})f_{1}(y_{1})f_{2}(y_{2})}
{\big(|x-y_{1}|^{2}+|x-y_{2}|^{2}\big)^{n-\alpha/2}}dy_{1}dy_{2}\bigg|\\
&&\lesssim |x-x_{Q}|^{-2n+\alpha}\prod_{i=1}^{2}
\int_{Q}(b(y_{i})-b_{Q})f_{i}(y_{i})dy_{i}\\
&&\lesssim|Q|^{\frac{\lambda_{1}-1}{p_{1}}+\frac{\lambda_{2}-1}{p_{2}}}|x-x_{Q}|^{-2n+\alpha}\prod_{i=1}^{2}\int_{Q}
|b(y_{i})-b_{Q}|dy_{i}\\
&&\lesssim |Q|^{2+\frac{\lambda_{1}}{p_{1}}+\frac{\lambda_{2}}{p_{2}}-\frac{1}{p_{1}}-\frac{1}{p_{2}}}|x-x_{Q}|^{-2n+\alpha}\\
&&\lesssim|Q|^{2-\frac{1}{p_{0}}}|x-x_{Q}|^{-2n+\alpha}.
\end{eqnarray*}

For (\ref{2.3.5}), using that $\big(b(y_{i})-b_{Q}\big)f_{i}(y_{i})\geq 0$, we can compute
\begin{eqnarray*}
&&|I_{\alpha}\big((b-b_{Q})f_{1},(b-b_{Q})f_{2}\big)(x)|\\
&&=\bigg|\int_{\mathbb{R}^n}\int_{\mathbb{R}^n}\frac{(b(y_{1})-b_{Q})(b(y_{2})-b_{Q})f_{1}(y_{1})f_{2}(y_{2})}
{\big(|x-y_{1}|^{2}+|x-y_{2}|^{2}\big)^{n-\alpha/2}}dy_{1}dy_{2}\bigg|\\
&&\gtrsim |x-x_{Q}|^{-2n+\alpha}\prod_{i=1}^{2}\bigg|\int_{Q}\big(b(y_{i})-b_{Q}\big)f_{i}(y_{i})dy_{i}\bigg|\\
&&=|x-x_{Q}|^{-2n+\alpha}\prod_{i=1}^{2}|Q|^{\frac{\lambda_{i}-1}{p_{i}}}\int_{Q}\big|b(y_{i})-b_{Q}\big|dy_{i}\\
&&\gtrsim \epsilon |Q|^{2-\frac{1}{p_{0}}}|x-x_{Q}|^{-2n+\alpha}.
\end{eqnarray*}

For (\ref{2.3.6}), applying the fact $|f_{2}(y_{2})|\leq 2|Q|^{(\lambda_{2}-1)/p_{2}}$ and $\int_{Q} f_{2}(y_{2})dy_{2}=0$, we can also estimate for any $y'\in Q$,
\begin{eqnarray*}
&&|I_{\alpha}\big((b-b_{Q})f_{1},f_{2}\big)(x)|\\
&&=\bigg|\int_{\mathbb{R}^n}\int_{\mathbb{R}^n}\frac{(b(y_{1})-b_{Q})f_{1}(y_{1})f_{2}(y_{2})}
{\big(|x-y_{1}|^{2}+|x-y_{2}|^{2}\big)^{n-\alpha/2}}dy_{1}dy_{2}\bigg|\\
&&=\bigg|\int_{\mathbb{R}^n}\int_{\mathbb{R}^n}\frac{(b(y_{1})-b_{Q})f_{1}(y_{1})f_{2}(y_{2})}
{\big(|x-y_{1}|^{2}+|x-y_{2}|^{2}\big)^{n-\alpha/2}}dy_{1}dy_{2}\\
&&\qquad -\int_{\mathbb{R}^n}\int_{\mathbb{R}^n}\frac{(b(y_{1})-b_{Q})f_{1}(y_{1})f_{2}(y_{2})}
{\big(|x-y_{1}|^{2}+|x-y'_{2}|^{2}\big)^{n-\alpha/2}}dy_{1}dy_{2}\bigg|\\
&&\lesssim |Q|^{\frac{\lambda_{2}-1}{p_{2}}}\int_{Q}
\int_{Q}\frac{|y_{2}-y'_{2}|(b(y_{1})-b_{Q})f_{1}(y_{1})}{\big(|x-y_{1}|+|x-y_{2}|\big)^{2n-\alpha+1}}dy_{1}dy_{2}\\
&&\lesssim|Q|^{\frac{\lambda_{1}-1}{p_{1}}+\frac{\lambda_{2}}{p_{2}}+\frac{1}{p'_{2}}+\frac{1}{n}}|x-x_{Q}|^{-2n+\alpha-1}\int_{Q}
|b(y_{1})-b_{Q}|dy_{1}\\
&&\lesssim |Q|^{2-\frac{1}{p_{0}}+\frac{1}{n}}|x-x_{Q}|^{-2n+\alpha-1}.
\end{eqnarray*}

It is easy to see that $|I_{\alpha}\big((b-b_{Q})f_{1},f_{2}\big)(x)|=|I_{\alpha}\big(f_{1},(b-b_{Q})f_{2}\big)(x)|$, then (\ref{2.3.7}) holds.

Finally, using that $f_{1}$ has mean zero we obtain (\ref{2.3.8}) as follows.
\begin{eqnarray*}
&&|I_{\alpha}\big(f_{1},f_{2}\big)(x)|\\
&&=\bigg|\int_{\mathbb{R}^n}\int_{\mathbb{R}^n}\frac{f_{1}(y_{1})f_{2}(y_{2})}
{\big(|x-y_{1}|^{2}+|x-y_{2}|^{2}\big)^{n-\alpha/2}}-\frac{f_{1}(y_{1})f_{2}(y_{2})}{\big(|x-y'_{1}|^{2}+|x-y_{2}|^{2}\big)^{n-\alpha/2}}dy_{1}dy_{2}\bigg|\\
&&\lesssim \int_{Q}
\int_{Q}\frac{|y_{1}-y'_{1}||f_{1}(y_{1})||f_{2}(y_{2})|}{\big(|x-y_{1}|+|x-y_{2}|\big)^{2n-\alpha+1}}dy_{1}dy_{2}\\
&&\lesssim  |Q|^{2-\frac{1}{p_{0}}+\frac{1}{n}}|x-x_{Q}|^{-2n+\alpha-1}.
\end{eqnarray*}

Now, we give the proofs of (\ref{2.3.1})-(\ref{2.3.3}). Taking $\nu>16,$ by (\ref{2.3.6}) we obtain
\begin{eqnarray*}
&&\bigg(\int_{|x-x_{Q}|>\nu r_{Q}}\big|(b(x)-b_{Q})I_{\alpha}((b-b_{Q})f_{1},f_{2})(x)\big|^{q}dx\bigg)^{\frac{1}{q}}\\
&&\lesssim |Q|^{2-\frac{1}{p_{0}}+\frac{1}{n}}\sum_{s=\lfloor\log_{2}\nu\rfloor}^{\infty}
\bigg(\int_{2^{s}r_{Q}<|x-x_{Q}|<2^{s+1}r_{Q}}\frac{|b(x)-b_{Q}|^{q}}{|x-x_{Q}|^{q(2n-\alpha+1)}}dx\bigg)^{\frac{1}{q}}\\
&&\lesssim |Q|^{1/q_{0}-1/q}\sum_{s=\lfloor\log_{2}\nu\rfloor}^{\infty}2^{-s(2n-\alpha+1)}
\bigg(\frac{1}{|2^{s+1}Q|}\int_{2^{s}r_{Q}<|x-x_{Q}|<2^{s+1}r_{Q}}|b(x)-b_{Q}|^{q}dx\bigg)^{\frac{1}{q}}\\
&&\lesssim |Q|^{1/q-1/q_{0}}\sum_{s=\lfloor\log_{2}\nu\rfloor}^{\infty}s2^{-s(2n-\alpha+1-\frac{n}{q})}\\
&&\lesssim |Q|^{1/q-1/q_{0}}\sum_{s=\lfloor\log_{2}\nu\rfloor}^{\infty}2^{-s(2n-\alpha-\frac{n}{q}+\frac{1}{2})}\\
&&\lesssim |Q|^{1/q-1/q_{0}}\nu^{-(2n-\alpha-\frac{n}{q}+\frac{1}{2})},
\end{eqnarray*}
where we have used that $s\leq 2^{s/2}$ for $4\leq \lfloor\log_{2}\nu\rfloor\leq s$.

Similarly, we also have
$$\bigg(\int_{|x-x_{Q}|>\nu r_{Q}}\big|(b(x)-b_{Q})I_{\alpha}(f_{1},(b-b_{Q})f_{2})(x)\big|^{q}dx\bigg)^{\frac{1}{q}}\lesssim |Q|^{1/q_{0}-1/q}\nu^{-2n-\alpha+\frac{1}{2}-\frac{n}{q}},$$
$$\bigg(\int_{|x-x_{Q}|>\nu r_{Q}}\big|(b(x)-b_{Q})^{2}I_{\alpha}(f_{1},f_{2})(x)\big|^{q}dx\bigg)^{\frac{1}{q}}\lesssim |Q|^{1/q_{0}-1/q}\nu^{-2n-\alpha+\frac{1}{2}-\frac{n}{q}}.$$

Then for $\mu>\nu,$ using (\ref{2.3.4}), (\ref{2.3.5}) and the estimates above, we get
\begin{eqnarray*}
&&|Q|^{1/q_{0}-1/q}\int_{\nu r_{Q}<|x-x_{Q}|<\mu r_{Q}}\big|[\Pi\vec{b},I_{\alpha}](f_{1},f_{2})(x)\big|^{q}dx\\
&&\geq C|Q|^{1/q_{0}-1/q}\bigg(\int_{\nu r_{Q}<|x-x_{Q}|<\mu r_{Q}}\big|I_{\alpha}((b-b_{Q})f_{1},(b-b_{Q})f_{2})(x)\big|^{q}dx\bigg)^{1/q}\\
&&\qquad -C|Q|^{1/q_{0}-1/q}\bigg(\int_{\nu r_{Q}<|x-x_{Q}|}\big|(b(x)-b_{Q})I_{\alpha}((b-b_{Q})f_{1},f_{2})(x)\big|^{q}dx\bigg)^{1/q}\\
&&\qquad -C|Q|^{1/q_{0}-1/q}\bigg(\int_{\nu r_{Q}<|x-x_{Q}|}\big|(b(x)-b_{Q})I_{\alpha}(f_{1},(b-b_{Q})f_{2})(x)\big|^{q}dx\bigg)^{1/q}\\
&&\qquad -C|Q|^{1/q_{0}-1/q}\bigg(\int_{\nu r_{Q}<|x-x_{Q}|}\big|(b(x)-b_{Q})^{2}I_{\alpha}(f_{1},f_{2})(x)\big|^{q}dx\bigg)^{1/q}\\
&&\geq C\epsilon|Q|^{2-\frac{\alpha}{n}-\frac{1}{q}}\bigg(\int_{\nu r_{Q}<|x-x_{Q}|<\mu r_{Q}}|x-x_{Q}|^{q(-2n+\alpha)}dx\bigg)^{1/q}-C\nu^{-2n+\alpha+\frac{n}{q}-\frac{1}{2}}\\
&&\geq C\epsilon \Big(\nu^{-2nq+n+\alpha q}-\mu^{-2nq+n+\alpha q}\Big)^{\frac{1}{q}}-C\nu^{-2n+\alpha+\frac{n}{q}-\frac{1}{2}}.
\end{eqnarray*}
We can select $\gamma_{1},\gamma_{2}$ in place of $\nu,\mu$ with $\gamma_{2}>>\gamma_{1}$, then (\ref{2.3.1}) and (\ref{2.3.2}) are verified for some $\gamma_{3}>0$.

We now verified (\ref{2.3.3}). Let $E\subset \big\{\gamma_{1}r_{Q}<|x-x_{Q}|<\gamma_{2}r_{Q}\big\}$ be an arbitrary measurable set. It follows from Minkowski inequality that
\begin{eqnarray*}
&&|Q|^{1/q_{0}-1/q}\bigg(\int_{E}\big|[\Pi\vec{b},I_{\alpha}](f_{1},f_{2})(x)\big|^{q}dx\bigg)^{\frac{1}{q}}\\
&&\lesssim |Q|^{2-\alpha/n-1/q}\bigg(\int_{E}|x-x_{Q}|^{-q(2n-\alpha)}dx\bigg)^{\frac{1}{q}}
+|Q|^{2-\alpha/n-1/q+\frac{1}{n}}\bigg(\int_{E}\frac{|b(x)-b_{Q}|^{q}}{|x-x_{Q}|^{q(2n-\alpha+1)}}dx\bigg)^{\frac{1}{q}}\\
&&\qquad +|Q|^{2-\alpha/n-1/q+\frac{1}{n}}\bigg(\int_{E}\frac{|b(x)-b_{Q}|^{2q}}{|x-x_{Q}|^{q(2n-\alpha+1)}}dx\bigg)^{\frac{1}{q}}\\
&&\lesssim \bigg(\frac{|E|^{1/q}}{|Q|^{1/q}}+\Big(\frac{1}{|Q|}\int_{E}|b(x)-b_{Q}|^{q}dx\Big)^{\frac{1}{q}}
+\Big(\frac{1}{|Q|}\int_{E}|b(x)-b_{Q}|^{2q}dx\Big)^{\frac{1}{q}}\bigg)\\
&&\lesssim \frac{|E|^{\frac{1}{2q}}}{|Q|^{\frac{1}{2q}}}\bigg(1+\log\Big(\frac{\tilde{C}|Q|}{|E|}\Big)\bigg)^{\frac{1+\lfloor 2q\rfloor}{2q}}.
\end{eqnarray*}
The last inequality can be obtained by \cite[P.309]{CDW1} taking $0<\beta<\min\{\tilde{C}^{1/n},\gamma_{2}\}$ and sufficiently small so that (\ref{2.3.3}) holds.
\end{proof}

In order to prove Theorem \ref{main1} and \ref{main2}, we need the characterization that a subset of $M^{p_{0}}_{p}$ is a strong pre-compact set.
\begin{lemma}\label{lem6}[\cite{CDW3}]
Let $1<p\leq p_{0}<\infty$. Suppose that the subset $\mathfrak{F}\subset M^{p_{0}}_{p}$ satisfies the following conditions:

(i) norm boundedness uniformly
$$
\sup_{f\in \mathfrak{F}}\|f\|_{M^{p_{0}}_{p}}<\infty;
$$

(ii) control uniformly away from the origin
$$
\lim_{A\rightarrow \infty}\|f\chi_{E_{A}}\|_{M^{p_{0}}_{p}}=0 \ uniformly\ in\ f\in \mathfrak{F}, \text{where}~E_{A}=\{x\in \mathbb{R}^n:|x|>A\};
$$
(iii) translation continuity uniformly
$$
\lim_{y\rightarrow 0}\|f(\cdot+y)-f(\cdot)\|_{M^{p_{0}}_{p}}=0  \ uniformly\ in\ f\in \mathfrak{F};
$$

then $\mathfrak{F}$ is pre-compact in $M^{p_{0}}_{p}$.
\end{lemma}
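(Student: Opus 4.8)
The plan is to establish a Riesz--Kolmogorov type compactness criterion by the classical device of approximating the whole family $\mathfrak{F}$, uniformly in the $M^{p_0}_{p}$ norm, by a single family that is manifestly precompact, and then passing to total boundedness. Fix a nonnegative $\varphi\in C_{c}^{\infty}(\mathbb{R}^{n})$ with $\int_{\mathbb{R}^{n}}\varphi=1$ and $\mathrm{supp}\,\varphi\subset B(0,1)$, write $\varphi_{\rho}(x)=\rho^{-n}\varphi(x/\rho)$, and for $A,\rho>0$ and $f\in\mathfrak{F}$ set $f_{A,\rho}:=(f\chi_{B(0,A)})*\varphi_{\rho}$. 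The argument then splits into three steps: (a) control $\|f-f_{A,\rho}\|_{M^{p_0}_{p}}$ uniformly in $f$ using (ii) and (iii); (b) show that $\{f_{A,\rho}:f\in\mathfrak{F}\}$ is precompact in $M^{p_0}_{p}$ for each fixed pair $A,\rho$; and (c) combine (a) and (b) by an $\epsilon$-net argument.

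For step (a), recall that translations act isometrically on $M^{p_0}_{p}$ and that the Morrey norm satisfies Minkowski's integral inequality (both follow at once from the pointwise identities $\|g\|_{M^{p_0}_{p}}=\sup_{Q}|Q|^{1/p_0-1/p}\|g\chi_{Q}\|_{L^{p}}$ and the corresponding $L^{p}$ facts). Since $\int\varphi_{\rho}=1$, $\varphi_{\rho}\ge0$ and $\mathrm{supp}\,\varphi_{\rho}\subset B(0,\rho)$, writing $f-f*\varphi_{\rho}=\int\varphi_{\rho}(y)\big(f(\cdot)-f(\cdot-y)\big)\,dy$ gives $\|f-f*\varphi_{\rho}\|_{M^{p_0}_{p}}\le\sup_{|y|\le\rho}\|f(\cdot-y)-f\|_{M^{p_0}_{p}}$, while $\|f_{A,\rho}-f*\varphi_{\rho}\|_{M^{p_0}_{p}}=\|(f\chi_{E_{A}})*\varphi_{\rho}\|_{M^{p_0}_{p}}\le\|f\chi_{E_{A}}\|_{M^{p_0}_{p}}$. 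Hence $\|f-f_{A,\rho}\|_{M^{p_0}_{p}}\le\sup_{|y|\le\rho}\|f(\cdot-y)-f\|_{M^{p_0}_{p}}+\|f\chi_{E_{A}}\|_{M^{p_0}_{p}}$; by (iii) the first term tends to $0$ as $\rho\to0$ uniformly in $f\in\mathfrak{F}$, and by (ii) the second tends to $0$ as $A\to\infty$ uniformly in $f\in\mathfrak{F}$. So, given $\epsilon>0$, one first fixes $A$ large and then $\rho$ small so that $\sup_{f\in\mathfrak{F}}\|f-f_{A,\rho}\|_{M^{p_0}_{p}}<\epsilon/2$.

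For step (b), fix $A$ and $\rho<1$, so that every $f_{A,\rho}$ is supported in the fixed ball $B:=B(0,A+1)$. The local embedding $M^{p_0}_{p}\hookrightarrow L^{p}(B(0,A))$ (obtained by testing the Morrey norm against a cube containing $B(0,A)$) together with hypothesis (i) shows that $\{f\chi_{B(0,A)}:f\in\mathfrak{F}\}$ is bounded in $L^{p}$; consequently, by H\"older's inequality, $\|f_{A,\rho}\|_{L^{\infty}}$ and $\|\nabla f_{A,\rho}\|_{L^{\infty}}=\|(f\chi_{B(0,A)})*\nabla\varphi_{\rho}\|_{L^{\infty}}$ are bounded uniformly in $f\in\mathfrak{F}$ (by constants depending on $A,\rho$). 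Thus $\{f_{A,\rho}:f\in\mathfrak{F}\}$ is uniformly bounded and equicontinuous on $\overline{B}$ and uniformly supported in $B$, so the Arzel\`{a}--Ascoli theorem makes it precompact in $C(\overline{B})$. Finally, for any $g$ supported in $B$ one has $\|g\|_{M^{p_0}_{p}}\le|B|^{1/p_0}\|g\|_{L^{\infty}}$: splitting the defining supremum, cubes $Q$ with $|Q|\le|B|$ contribute at most $|Q|^{1/p_0}\|g\|_{L^{\infty}}\le|B|^{1/p_0}\|g\|_{L^{\infty}}$, and cubes with $|Q|>|B|$ contribute at most $|Q|^{1/p_0-1/p}\|g\|_{L^{p}(B)}\le|B|^{1/p_0-1/p}|B|^{1/p}\|g\|_{L^{\infty}}$ (using $1/p_0-1/p\le0$). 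Hence uniform convergence on $B$ forces convergence in $M^{p_0}_{p}$, and $\{f_{A,\rho}:f\in\mathfrak{F}\}$ is precompact in $M^{p_0}_{p}$.

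Step (c) is then routine: choosing a finite $\epsilon/2$-net for $\{f_{A,\rho}:f\in\mathfrak{F}\}$ in $M^{p_0}_{p}$ produces, via step (a), a finite $\epsilon$-net for $\mathfrak{F}$, so $\mathfrak{F}$ is totally bounded; since $M^{p_0}_{p}$ is complete, $\mathfrak{F}$ is precompact. The point I expect to be the main subtlety — and the reason all three hypotheses are genuinely needed — is that $C_{c}^{\infty}(\mathbb{R}^{n})$ is \emph{not} dense in $M^{p_0}_{p}$ when $p<p_0$, so mollification does not by itself converge to $f$ in the Morrey norm; condition (iii) is precisely what makes step (a) work, and this is the feature that distinguishes the Morrey case from the classical $L^{p}$ Fr\'echet--Kolmogorov theorem. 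A secondary technical point is the transfer of precompactness from $C(\overline{B})$ to $M^{p_0}_{p}$, which rests on the elementary norm comparison for functions supported in a fixed ball established in step (b).
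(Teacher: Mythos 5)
Your proof is correct; note that the paper gives no proof of this lemma but quotes it from \cite{CDW3}, and your argument — truncation plus mollification with the error controlled uniformly by (ii) and (iii), Arzel\`a--Ascoli for the mollified family on a fixed ball, transfer back to the Morrey norm via $\|g\|_{M^{p_0}_{p}}\le |B|^{1/p_0}\|g\|_{L^{\infty}}$ for $g$ supported in $B$, and a finite $\epsilon$-net — is essentially the standard route taken in that reference. The supporting facts you invoke (Minkowski's integral inequality and translation invariance for the Morrey norm, the local embedding $M^{p_0}_{p}\hookrightarrow L^{p}(B(0,A))$, and the two-case estimate over cubes with $|Q|\le|B|$ and $|Q|>|B|$) all check out.
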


\section{Proof of Theorem \ref{main1} and Theorem \ref{main2}}

\vskip 0.5cm
\noindent
{\it Proof of Theorem \ref{main1}.}
We need only to show the set $\mathfrak{F}:=\{[\Pi\vec{b},T](f_{1},f_{2}): \|(f_{1},f_{2})\|_{\mathcal{M}^{p_{0}}_{\vec{P}}}\leq 1\}$ is strong pre-compact in $M^{p_{0}}_{p}$ when $b_{1},b_{2}\in C^{\infty}_{c}$. By Lemma \ref{lem6}, we need to verify the conditions $(i), (ii)$ and $(iii)$ hold uniformly in $\mathfrak{F}$ for $b_{1},b_{2}\in C^{\infty}_{c}$.

It is easy to verify that $\mathfrak{F}$ satisfies the condition $(i)$ by Lemma \ref{lem1}.

As for condition $(ii)$, suppose that ${\rm supp}~ b_{1}, b_{2}\subset \{x\in \mathbb{R}^{n}:|x|\leq \beta\}$ with $\beta>1$. For any $A>2\beta$,
$$
|Q|^{1/p_{0}-1/p}\bigg(\int_{Q}|[\Pi\vec{b},T](f_{1},f_{2})(x)|^{p}\chi_{E_{A}}(x)dx\bigg)^{1/p}\rightarrow 0, (A\rightarrow +\infty).
$$
In fact, for any $x\in E_{A}$, $|x|\leq |x-y_{1}|+|y_{1}|\leq |x-y_{1}|+|x-y_{2}|$, then
\begin{eqnarray*}
|[\Pi\vec{b},T](f_{1},f_{2})(x)|&\lesssim &\|b_{1}\|_{\infty}\|b_{2}\|_{\infty}\int_{|y_{1}|\leq \beta}\int_{|y_{2}|\leq \beta}\frac{|f_{1}(y_{1})f_{2}(y_{2})|}{(|x-y_{1}|+|x-y_{2}|)^{2n}}dy_{1}dy_{2}\\
&\lesssim& \frac{1}{|x|^{2n}}\int_{|y_{1}|\leq \beta}\int_{|y_{2}|\leq \beta}|f_{1}(y_{1})f_{2}(y_{2})|dy_{1}dy_{2}\\
&\lesssim& \beta^{2n-n/p_{0}}|x|^{-2n}\|(f_{1},f_{2})\|_{\mathcal{M}^{p_{0}}_{\vec{P}}}.
\end{eqnarray*}
From $L^{p_{0}}\subset M^{p_{0}}_{q}$, it follows that for any cube $Q$
\begin{eqnarray*}
&&|Q|^{1/p_{0}-1/p}\bigg(\int_{Q}|[\Pi\vec{b},T](f_{1},f_{2})(x)|^{p}\chi_{E_{A}}(x)dx\bigg)^{1/p}\\
&&\lesssim \bigg(\int_{|x|>A}\beta^{2n-n/p_{0}}|x|^{-2n}dx\bigg)^{1/p_{0}}\\
&&\lesssim \Big|\frac{\beta}{A}\Big|^{2n-n/p_{0}}.
\end{eqnarray*}
Thus, the inequality above tends to zero as $A\rightarrow \infty$.

Finally, it remains to prove condition $(iii)$. We need to show that for any $0<\epsilon<1$, if $|t|$ is sufficiently small depending $\epsilon$, then
$$\|[\Pi\vec{b},T](f_{1},f_{2})(\cdot+t)-[\Pi\vec{b},T](f_{1},f_{2})(\cdot)\|_{M^{p_{0}}_{p}}\lesssim \epsilon.$$

To do this, we break
$$[\Pi\vec{b}, T](f_{1},f_{2})(x)-[\Pi\vec{b}, T](f_{1},f{2})(x+t)$$
into a sum of four terms
$${\rm II_{1}+II_{2}+II_{3}+II_{4}}$$
with
\begin{eqnarray*}
&&{\rm II_{1}}:=\iint_{\Omega}K(x,y_{1},y_{2})(b_{1}(x+t)-b_{1}(x))(b_{2}(y_{2})-b_{2}(x))f_{1}(y_{1})f_{2}(y_{2})dy_{1}dy_{2};\\
&&{\rm II_{2}}:=\iint_{\Omega}\Big(K(x,y_{1},y_{2})(b_{2}(y_{2})-b_{2}(x))-K(x+t,y_{1},y_{2})(b_{2}(y_{2})-b_{2}(x+t))\Big)\\
&&\qquad \qquad\times(b_{1}(y_{1})-b_{1}(x+t))f_{1}(y_{1})f_{2}(y_{2})dy_{1}dy_{2};\\
&&{\rm II_{3}}:=\iint_{\Omega^{c}}K(x,y_{1},y_{2})(b_{1}(y_{1})-b_{1}(x))(b_{2}(y_{2})-b_{2}(x))f_{1}(y_{1})f_{2}(y_{2})dy_{1}dy_{2};\\
&&{\rm II_{4}}:=\iint_{\Omega^{c}}K(x+t,y_{1},y_{2})(b_{1}(y_{1})-b_{1}(x+t))(b_{2}(x+t)-b_{2}(y_{2}))f_{1}(y_{1})f_{2}(y_{2})dy_{1}dy_{2},
\end{eqnarray*}
where $\Omega=\{(y_{1},y_{2}):|x-y_{1}|+|x-y_{2}|>\delta:=\epsilon^{-1} |t|\}$ and $\Omega^{c}=\mathbb{R}^{2n}\backslash \Omega$.

For ${\rm II_{1}}$, we can compute
\begin{eqnarray*}
{\rm II_{1}(x,t)}&\lesssim&|t|\|\nabla b_{1}\|_{\infty}\big(T_{*}(f_{1},b_{2}f_{2})(x)+|b_{2}(x)|T_{*}(f_{1},f_{2})(x)\big).
\end{eqnarray*}
By Corollary \ref{c} and $b_{2}\in L^{\infty}$, we obtain
$$\|{\rm II_{1}}\|_{M^{p_{0}}_{p}}\lesssim
|t|\bigg(\|(f_{1},b_{2}f_{2})\|_{\mathcal{M}^{p_{0}}_{\vec{P}}}+\|b_{2}\|_{\infty}\|(f_{1},f_{2})\|_{\mathcal{M}^{p_{0}}_{\vec{P}}}\bigg)\lesssim |t|.$$

To deal with the ${\rm II_{2}}$ term, we write ${\rm II_{2}}$ as a sum of three terms, ${\rm II_{21}}+{\rm II_{22}}+{\rm II_{23}}$, where
\begin{eqnarray*}
&&{\rm II_{21}}=\iint_{\Omega}\Big(K(x,y_{1},y_{2})-K(x+t,y_{1},y_{2})\Big)(b_{1}(y_{1})-b_{1}(x+t))f_{1}(y_{1})b_{2}(y_{2})f_{2}(y_{2})dy_{1}dy_{2};\\
&&{\rm II_{22}}=b_{2}(x+t)\iint_{\Omega}\Big(K(x,y_{1},y_{2})-K(x+t,y_{1},y_{2})\Big)(b_{1}(y_{1})-b_{1}(x))f_{1}(y_{1})f_{2}(y_{2})dy_{1}dy_{2}.\\
&&{\rm II_{23}}=\iint_{\Omega}K(x,y_{1},y_{2})(b_{1}(y_{1})-b_{1}(x+t))(b_{2}(x+t)-b_{2}(x))f_{1}(y_{1})f_{2}(y_{2})dy_{1}dy_{2}.
\end{eqnarray*}

By the regularity condition of function $K$, we have
\begin{eqnarray*}
{\rm II_{21}}&\lesssim& \|b_{1}\|_{\infty}\|b_{2}\|_{\infty}\iint_{\Omega}\frac{|t|^{\gamma}|f_{1}(y_{1})||f_{2}(y_{2})|}{(|x-y_{1}|+|x-y_{2}|)^{2n+\gamma}}dy_{1}dy_{2}\\
&\lesssim&|t|^{\gamma}\sum_{k=1}^{\infty}\iint_{2^{k-1}\delta\leq |x-y_{1}|+|x-y_{2}|<2^{k}\delta}\frac{|f_{1}(y_{1})||f_{2}(y_{2})|}{(|x-y_{1}|+|x-y_{2}|)^{2n+\gamma}}dy_{1}dy_{2}\\
&\lesssim& |t|^{\gamma}\delta^{-\gamma}\mathcal{M}(f_{1},f_{2})(x),
\end{eqnarray*}
where the bilinear maximal operator $\mathcal{M}$ is defined by Lerner et al.\cite{LOPTT}, which is used to obtain a precise control on multinear singular integral operators. The bilinear maximal operator $\mathcal{M}$ is defined by
$$\mathcal{M}(f_{1},f_{2})(x)=\sup_{Q\ni x}\prod_{i=1}^{2}\frac{1}{|Q|}\int_{Q}|f_{i}(y_{i})|dy_{i}.$$
By the boundedness of $\mathcal{M}$ from $\mathcal{M}^{p_{0}}_{\vec{P}}$ to $M^{p_{0}}_{p}$ (see \cite{ISST}), we obtain
$$\|{\rm II_{21}}\|_{M^{p_{0}}_{p}}\lesssim|t|^{\gamma}\delta^{-\gamma}
\|\mathcal{M}(f_{1},f_{2})\|_{M^{p_{0}}_{p}}\lesssim |t|^{\gamma}\delta^{-\gamma}.$$

The term ${\rm II_{22}}$ is estimated using the same methods as in the proof for ${\rm II_{21}}$, and the term ${\rm II_{23}}$ is same as ${\rm II_{1}}$. Then
$$\|{\rm II_{2}}\|_{M^{p_{0}}_{p}}\lesssim |t|+|t|^{\gamma}\delta^{-\gamma}.$$

Note that
\begin{eqnarray*}
{\rm II_{3}}&\lesssim& \|\nabla b_{1}\|_{L^{\infty}} \|\nabla b_{2}\|_{L^{\infty}}\iint_{\Omega^{c}}\frac{|f_{1}(y_{1})||f_{2}(y_{2})|}{(|x-y_{1}|+|x-y_{2}|)^{2n-2}}dy_{1}dy_{2}\\
&\lesssim&\sum_{k=1}^{\infty}\iint_{2^{-k}\delta<|x-y_{1}|+|x-y_{2}|\leq 2^{-k+1}\delta}\frac{|f_{1}(y_{1})||f_{2}(y_{2})|}{(|x-y_{1}|+|x-y_{2}|)^{2n-2}}dy_{1}dy_{2}\\
&\lesssim&\delta^{2}\mathcal{M}(f_{1},f_{2})(x),
\end{eqnarray*}
which gives that
$$\|{\rm II_{3}}\|_{M^{p_{0}}_{p}}\lesssim \delta^{2}.$$
Finally, for the last term we proceed in an analogous manner, by replacing $x$ with $x+t$ and the region of integration $\{(y_{1},y_{2}):|x-y_{1}|+|x-y_{2}|<\delta\}$ with the larger one $\{(y_{1},y_{2}):|x+t-y_{1}|+|x+t-y_{2}|<\delta+2|t|\}$. Thus,
$$\|{\rm II_{4}}\|_{M^{p_{0}}_{p}}\lesssim (\delta+2|t|)^{2}.$$

For any $0<\epsilon<1$, there exists a constant $t_{0}=\epsilon^{2}$, for any $|t|<t_{0}$,
$$\|[\Pi\vec{b},T](f_{1},f_{2})(\cdot+t)-[\Pi\vec{b},T](f_{1},f_{2})(\cdot)\|_{_{M^{p_{0}}_{p}}}\lesssim |t|+\epsilon^{\gamma}+\frac{|t|}{\epsilon}+(\frac{1}{\epsilon}+2)^{2}|t|^{2}\lesssim \epsilon^{\gamma}.$$
We prove that condition $(iii)$ holds for $[\Pi\vec{b},T](f_{1},f_{2})$ uniformly in $\mathfrak{F}$ and Theorem \ref{main1} follows.

\vskip 0.5cm
\noindent
{\it Proof of Theorem \ref{main2}.}
We need only to verify the conditions $(i), (ii)$ and $(iii)$ hold uniformly in $\mathcal{G}$ for $b_{1},b_{2}\in C^{\infty}_{c}$,
where
$$\mathcal{G}=\big\{[\Pi\vec{b},I_{\alpha}](f_{1},f_{2}): \|(f_{1},f_{2})\|_{\mathcal{M}^{p_{0}}_{\vec{P}}}\leq 1\big\}.$$

By Lemma \ref{lem3}, we have $\mathcal{G}$ is uniformly bounded.

For the condition $(ii)$, suppose that ${\rm supp} ~b_{1}, b_{2}\subset \{x\in \mathbb{R}^{n}:|x|\leq \beta\}$ with $\beta>1$ and let $A\geq 2\beta$. Then for any $|x|>A$ and $|y_{1}|,|y_{2}|\leq \beta$, we have $|x-y_{1}|^{2}+|x-y_{2}|^{2}\gtrsim |x|^{2}$. Thus, for any cube $Q$,
\begin{eqnarray*}
&&|Q|^{1/q_{0}-1/q}\bigg(\int_{Q}|[\Pi\vec{b},I_{\alpha}](f_{1},f_{2})(x)|^{q}\chi_{E_{A}}(x)dx\bigg)^{1/q}\\
&&\lesssim |Q|^{1/q_{0}-1/q}\bigg(\int_{Q}\bigg(\int_{|y_{1}|\lesssim \beta}\int_{|y_{2}|\lesssim \beta}\frac{|f_{1}(y_{1})f_{2}(y_{2})|dy_{1}dy_{2}}{(|x-y_{1}|^{2}+|x-y_{2}|^{2})^{n-\alpha/2}}\bigg)^{q}\chi_{E_{A}}(x)dx\bigg)^{1/q}\\
&&\lesssim|Q|^{1/q_{0}-1/q}\bigg(\int_{Q}|x|^{\alpha q-2nq}\beta^{2nq-nq/q_{0}}\chi_{E_{A}}(x)dx\bigg)^{1/q}\|(f_{1},f_{2})\|_{\mathcal{M}^{p_{0}}_{\vec{P}}}\\
&&\lesssim \beta^{2n-n/q_{0}}\bigg(\int_{|x|>A}|x|^{\alpha q_{0}-2nq_{0}}dx\bigg)^{1/q_{0}}\\
&&\lesssim \big(\frac{\beta}{A}\big)^{2n-n/q_{0}}.
\end{eqnarray*}
Thus, (b) holds by letting $A\rightarrow \infty$.

To prove the uniform continuity of $\mathcal{G}$, we must see that
$$\lim_{t\rightarrow 0}\|[\Pi\vec{b},I_{\alpha}](f_{1},f_{2})(\cdot+t)-[\Pi\vec{b},I_{\alpha}](f_{1},f_{2})(\cdot)\|_{M^{q_{0}}_{q}}=0.$$

To deal with compactness of fractional integral operators, we find it convenient to use smooth truncations of $I_{\alpha}$. The operator $I^{\delta}_{\alpha}$ is defined by a smooth kernel $K^{\delta}(x,y_{1},y_{2})$ such that
$$K^{\delta}(x,y_{1},y_{2})=\frac{1}{(|x-y_{1}|^{2}+|x-y_{2}|^{2})^{n-\alpha/2}}$$
for $|x-y_{1}|+|x-y_{2}|>2\delta$;
$$K^{\delta}(x,y_{1},y_{2})=0$$
for $|x-y_{1}|+|x-y_{2}|\leq \delta$;
and
$$|\partial^{\gamma} K^{\delta}(x,y_{1},y_{2})|\lesssim \frac{1}{(|x-y_{1}|+|x-y_{2}|)^{2n-\alpha+\gamma}}$$
for all $(x,y_{1},y_{2})$ and all-multi-indexes with $|\gamma|\leq 1$.

Then, we need only to show that
\begin{equation}\label{1}
\lim_{t\rightarrow 0}\|[\Pi\vec{b},I^{\delta}_{\alpha}](f_{1},f_{2})(\cdot+t)-[\Pi\vec{b},I^{\delta}_{\alpha}](f_{1},f_{2})(\cdot)\|_{M^{q_{0}}_{q}}=0,
\end{equation}
where $\delta=|t|^{1/2}$. In fact, for any $x\in \mathbb{R}^n$
\begin{eqnarray*}
&&\Big|[\Pi\vec{b},I_{\alpha}](f_{1},f_{2})(x)-[\Pi\vec{b},I^{\delta}_{\alpha}](f_{1},f_{2})(x)\Big|\\
&&\leq \iint_{|x-y_{1}|+|x-y_{2}|\leq 2\delta}
\frac{|b_{1}(x)-b_{1}(y_{1})||b_{2}(x)-b_{2}(y_{2})||f_{1}(y_{1})||f_{2}(y_{2})|}{(|x-y_{1}|^{2}+|x-y_{2}|^{2})^{n-\alpha/2}}dy_{1}dy_{2}\\
&&\lesssim \|\nabla b_{1}\|_{\infty} \|\nabla b_{2}\|_{\infty}\iint_{|x-y_{1}|+|x-y_{2}|\leq 2\delta}
\frac{|f_{1}(y_{1})||f_{2}(y_{2})|}{(|x-y_{1}|^{2}+|x-y_{2}|^{2})^{n-\alpha/2-1}}dy_{1}dy_{2}\\
&&\lesssim\sum_{k=0}^{\infty}(2^{-k}\delta)^{\alpha+2-2n}\iint_{2^{-k+1}\delta< |x-y_{1}|+|x-y_{2}|\leq 2^{k}\delta}|f_{1}(y_{1})||f_{2}(y_{2})|dy_{1}dy_{2}\\
&&\lesssim\sum_{k=0}^{\infty}(2^{-k}\delta)^{2}\mathcal{M}_{\alpha}(f_{1},f_{2})(x)\\
&&\lesssim \delta^{2}I_{\alpha}(|f_{1}|,|f_{2}|)(x).
\end{eqnarray*}
Set $\delta^{2}=|t|$, which gives that
$$\|[\Pi\vec{b},I_{\alpha}](f_{1},f_{2})-[\Pi\vec{b},I^{\delta}_{\alpha}](f_{1},f_{2})\|_{M^{q_{0}}_{q}}\lesssim |t|.$$

To prove (\ref{1}), we write
\begin{eqnarray*}
&&[\Pi\vec{b},I_{\alpha}^{\delta}](f_{1},f_{2})(x)-[\Pi\vec{b},I_{\alpha}^{\delta}](f_{1},f_{2})(x+t)\\
&&=\int_{\mathbb{R}^n}\int_{\mathbb{R}^n}\big(b_{1}(x)-b_{1}(y_{1})\big)\big(b_{2}(x)-b(y_{2})\big)K^{\delta}(x,y_{1},y_{2})f_{1}(y_{1})f_{2}(y_{2})dy_{1}dy_{2}\\
&&\quad-\int_{\mathbb{R}^n}\int_{\mathbb{R}^n}\big(b_{1}(x+t)-b(y_{1})\big)\big(b_{2}(x+t)-b(y_{2})\big)K^{\delta}(x+t,y_{1},y_{2})f_{1}(y_{1})f_{2}(y_{2})dy_{1}dy_{2}\\
&&=\big(b_{1}(x)-b_{1}(x+t)\big)\big(b_{2}(x)-b_{2}(x+t)\big)\int_{\mathbb{R}^n}\int_{\mathbb{R}^n}K^{\delta}(x,y_{1},y_{2})f_{1}(y_{1})f_{2}(y_{2})dy_{1}dy_{2}\\
&&\quad +\big(b_{1}(x)-b_{1}(x+t)\big)\int_{\mathbb{R}^n}\int_{\mathbb{R}^n}\big(b_{2}(x+t)-b(y_{2})\big)K^{\delta}(x,y_{1},y_{2})f_{1}(y_{1})f_{2}(y_{2})dy_{1}dy_{2}\\
&&\quad +\big(b_{2}(x)-b_{2}(x+t)\big)\int_{\mathbb{R}^n}\int_{\mathbb{R}^n}\big(b_{1}(x+t)-b(y_{1})\big)K^{\delta}(x,y_{1},y_{2})f_{1}(y_{1})f_{2}(y_{2})dy_{1}dy_{2}\\
&&\quad +\int_{\mathbb{R}^n}\int_{\mathbb{R}^n}\big(b_{1}(x+t)-b(y_{1})\big)\big(b_{2}(x+t)-b(y_{2})\big)\\
&&\qquad \qquad\times \big(K^{\delta}(x,y_{1},y_{2})-K^{\delta}(x+t,y_{1},y_{2})\big)f_{1}(y_{1})f_{2}(y_{2})dy_{1}dy_{2}\\
&&={\rm JJ_{1}+JJ_{2}+JJ_{3}+JJ_{4}}.
\end{eqnarray*}
For ${\rm JJ_{1}}$, we simply have
$$|{\rm JJ_{1}}|\lesssim |t|^{2}\|\nabla b_{1}\|_{\infty}\|\nabla b_{2}\|_{\infty}I_{\alpha}(|f_{1}|,|f_{2}|)(x),$$
which implies that
$$\|{\rm JJ_{1}}\|_{M^{q_{0}}_{q}}\lesssim |t|^{2}.$$

Similarly, we also have that for $j=2,3$
$$|{\rm JJ_{j}}|\lesssim |t|\|\nabla b_{1}\|_{\infty}\|b_{2}\|_{\infty}I_{\alpha}(|f_{1}|,|f_{2}|)(x).$$

We now give the estimate for ${\rm JJ_{4}}$. We may assume that $t$ small enough such that $|t|\in (0,\frac{\delta}{4})$. If $|x-y_{1}|+|x-y_{2}|\leq \delta/2$ we have $|x+t-y_{1}|+|x+t-y_{2}|\leq \delta/2+2|t|\leq \delta$, thus
$$K^{\delta}(x+t,y_{1},y_{2})-K^{\delta}(x,y_{1},y_{2})=0.$$
This gives us that
\begin{eqnarray*}
\big|{\rm JJ_{4}}\big|&\lesssim& |t|\|b_{1}\|_{\infty}\|b_{2}\|_{\infty}\iint_{|x-y_{1}|+|x-y_{2}|> \frac{\delta}{2}}\frac{|f_{1}(y_{1})f_{2}(y_{2})|}{\big(|x-y_{1}|+|x-y_{2}|\big)^{2n-\alpha+1}}dy_{1}dy_{2}\\
&\lesssim& |t|\sum_{j=0}^{\infty}\iint_{2^{j-1}\delta<|x-y_{1}|+|x-y_{2}|< 2^{j}\delta}\frac{|f_{1}(y_{1})f_{2}(y_{2})|}{\big(|x-y_{1}|+|x-y_{2}|\big)^{2n-\alpha+1}}dy_{1}dy_{2}\\
&\lesssim& |t|\sum^{\infty}_{j=0}(2^{j}\delta)^{-1}\iint_{2^{j-1}\delta<|x-y_{1}|+|x-y_{2}|< 2^{j}\delta}\frac{|f_{1}(y_{1})f_{2}(y_{2})|}{\big(|x-y_{1}|+|x-y_{2}|\big)^{2n-\alpha}}dy_{1}dy_{2}\\
&\lesssim&\frac{|t|}{\delta}I_{\alpha}(|f_{1}|,|f_{2}|)(x).
\end{eqnarray*}

Combining the estimates above and $t$ small enough such that $0<4t<\delta:=|t|^{1/2}$, we have
$$\|[\Pi\vec{b},I_{\alpha}^{\delta}](f_{1},f_{2})(\cdot+t)-[\Pi\vec{b},I_{\alpha}^{\delta}](f_{1},f_{2})(\cdot)\|_{_{M^{q_{0}}_{q}}}\lesssim (|t|^{2}+|t|+\frac{|t|}{\delta})\|I_{\alpha}(|f_{1}|,|f_{2}|)\|_{M^{q_{0}}_{q}}\lesssim |t|^{1/2}.$$

Thus, the compactness of $[\Pi \vec{b},I_{\alpha}]$ on Morrey space is completed.
\vspace{0.3cm}

So it remains to show that if $b_{1}=b_{2}$ and $[\Pi\vec{b},I_{\alpha}]$ is a compact operator from $\mathcal{M}^{p_{0}}_{\vec{P}}$ to $M^{q_{0}}_{q}$, then $b_{1},b_{2}\in {\rm CMO}$.

First, Lemma \ref{lem3} implies that $b\in {\rm BMO}$. To prove $b$ be an element of ${\rm CMO}$, we will adapt some arguments from \cite{CDW1}, see also \cite{CT}, which in turn are based on the original work in \cite{U}. The approach is the following: if one of the conditions Eqs.(\ref{2.1.1})-(\ref{2.1.3}) in Lemma \ref{lem4} is failed, we will show that there exist sequences of functions, $\{f_{j}\}_{j}$ and $\{g_{j}\}_{j}$ uniformly bounded on $\mathcal{M}^{p_{0}}_{\vec{P}}$, such that $[[\Pi\vec{b},I_{\alpha}](f_{j},g_{j})$ has no convergent subsequence, which contradicts the assumption that $[\Pi\vec{b},I_{\alpha}]$ is compact. It gives us that if $[\Pi\vec{b},I_{\alpha}]$ is compact, $b$ must satisfy all three conditions; that is $b\in {\rm CMO}$.

By Lemma \ref{lem5}, it is sufficient to once again repeat the steps preformed in \cite{CT} (or\cite{CDW1},\cite{CDW2}) to obtain the desired result and it is left to the reader. \qed

{\bf Acknowledgments} We would like to thank the anonymous referee for his/her comments.

%%%%%%%%%%%%%%%%%%%%%%%%%%%%%%%%%%%%%%%%%%%%%%%%%%%%%%%%%%%%%%%%%%%%%%
%%%%%%%%%%%%%%%%%%%%%%%%%%%%%%%%%%%%%%%%%%%%%%%%%%%%%%%%%%%%%%%%%%%%%%
\color{black}
\vskip 0.5cm

%\end{CJK}

\begin{thebibliography}{999}
\bibitem{A1}
Adams, D.: A note on Riesz potentials. Duke Math. J. {\bf 42}(4), 765-778 (1975)

\bibitem{A2}
Adams, D., Xiao, J.: Nonlinear potential analysis on Morrey spaces and their capacities. Indiana Univ. Math. J. {\bf 53}(6),1629-1663 (2004)

\bibitem{B1}
B\'{e}nyi, \'{A}., Dami\'{a}n, W., Moen, K., Torres, R.H.: Compactness properties of commutators of bilinear frctional integrals. Math. Z. {\bf 280}, 569-582 (2015)

\bibitem{B2}
B\'{e}nyi, \'{A}., Dami\'{a}n, W., Moen, K., Torres, R.H.: Compact bilinear operators: the weighted case. Mich. Math. J. {\bf 4}, 39-51 (2015)

\bibitem{BO}
B\'{e}nyi, \'{A}., Oh, T.: Smoothing of commtators for a H\"{o}rmander classes of bilinear pseudodifferential operators. J. Fourier Anal. Appl. {\bf 20}(2), 282-300 (2014)

\bibitem{BT}
B\'{e}nyi, \'{A}., Torres, R.H.: Compact bilinear operators and commutators. Proc. Amer. Math. Soc. {\bf 141}(10), 3609-3621 (2013)

\bibitem{CT}
Chaffee, L., Torres, R.H.: Characterization of Compactness of the Commutators of Bilinear Fractional Integral Operators. Potential Anal. {\bf43}, 481-494 (2015)

\bibitem{C}
Chanillo, S.: \newblock {\em A note on commutators},
\newblock {Indiana Univ. Math. J.}, 1982(31): 7-16.

\bibitem{CD1}
Chen, Y., Ding, Y.: Compactness of commutators of singular integrals with variable kernels, (in Chinese). Chin. Ann. Math. {\bf30}(A), 201-212(2009)

\bibitem{CD2}
Chen, Y., Ding, Y.: Compactness of commutators for Littlewood-Paley operators. Kodai Math. J. {\bf 32}, 256-323 (2009)

\bibitem{CD3}
Chen, Y., Ding, Y.: Compactness of the commutators for parabolic singular integrals. Sci. China (A) {\bf 53}, 2633-2648 (2010)

\bibitem{CDW1}
Chen, Y., Ding, Y., Wang, X.: Compactness of commutators of Riesz potential on Morrey spaces. Potential Anal. {\bf 30}, 301-313 (2009)

\bibitem{CDW2}
Chen, Y., Ding, Y., Wang, X.: Compactness for commutators of Marcinkiewicz integral in Morrey spaces. Taiwan. J. Math. {\bf 15}, 633-658 (2011)

\bibitem{CDW3}
Chen, Y., Ding, Y., Wang, X.: Compactness of commutators for Singular integrals on Morrey spaces. Canad. J. Math. {\bf 64}, 257-281(2012)

\bibitem{CJ}
Christ, M., Journ\'{e}, J.L.: Polynomial growth estimates for multilinear singular integral operators, Acta Math. {\bf159},51-80 (1987)

\bibitem{CC}
Clop, A., Cruz, V.: Weighted estimates for Beltrami equations. Ann. Acad. Sci. Fenn. Math. {\bf 38}, 91-113 (2013)

\bibitem{CM1}
Coifman, R., Meyer, Y.: On commutators of singular integrals and bilinear singular integrals, Trans. Amer. Math. {\bf 212}, 315-331 (1975)

\bibitem{CM2}
Coifman, R., Meyer, Y.: Commutateurs d'int\'{e}grales singuli\`{e}res et op\'{e}rateurs multilin\'{e}aires, Ann. Inst. Fourier (Grenoble) {\bf 28}(3), 177-202 (1978)

\bibitem{CRW}
Coifman, R., Rochberg, R., Weiss, G.: Factorization theorems for Hardy
spaces in several variables. \emph{Ann. of Math.}, {\bf103}, 611-635 (1976)

\bibitem{DM}
Ding, Y., Mei, T.: Boundedness and compactness for the commutators of bilinear operators on Morrey spaces. Potential Anal. {\bf 42}, 717-748 (2015)

\bibitem{G}
Grafakos, L.: On multilinear fractional integrals. Studia Math. {\bf102}, 49-56 (1992)

\bibitem{GK}
Grafakos, L., Kalton, N.: Some remarks on multilinear maps and interpolation, Math. Ann.
{\bf 319}, 151-180 (2001)

\bibitem{GT}
Grafakos, L., Torres, R.H.: Multilinear Calderon-Zygmund theory, Adv. Math. {\bf 165}(1), 124-164 (2002)

\bibitem{HH}
Hanche-Olsen, H., Holden, H.: The Kolmogorov-Riesz compactness theorem. Expo. Math. {\bf 28}, 385-394 (2010)

\bibitem{ISST}
Iida, T., Sato, E., Sawano, Y., Tanaka, H.: Sharp bounds for multilinear fractional integral operators on Morrey type spaces. Positivity {\bf 16}, 339-358 (2012)

\bibitem{K}
Kato, T.: Strong solutions of the Navier-Stokes equation in Morrey spaces. Bol. Soc. Brasil. Mat. {\bf 22}(2), 127-155 (1992)

\bibitem{KS}
Kenig, C., Stein, E.: Multilinear estimates and fractional integration, Math. Res. Lett. {\bf 6},
1-15 (1999)

\bibitem{KL1}
Krantz, S., Li, S.-Y.: Boundedness and compactness of integral operators on spaces of homogeneous
type and applications, I. J. Math. Anal. Appl. {\bf 258}, 629-641 (2001)

\bibitem{KL2}
Krantz, S., Li, S.-Y.: Boundedness and compactness of integral operators on spaces of homogeneous
type and applications, II. J. Math. Anal. Appl. {\bf 258}, 642-657 (2001)

\bibitem{LOPTT}
Lerner, A.K., Ombrosi, S., P\'{e}rez, C., Torres, R.H., Trujillo-Gonz\'{a}lez, R.: New maximal functions and multiple weights for the multilinear Calder\'{o}n-Zygmund theory. \emph{Adv. Math.}, {\bf220}, 1222-1264 (2009)

\bibitem{M}
Moen, K.: Weighted inequalities for multilinear fractional integral operators. Collect Math. {\bf 60}, 213-238 (2009)

\bibitem{P}
P\'{e}rez, C.: Endpoint estimates for commutators of singular integral operators. \emph{J. Funct. Anal.}, {\bf128}, 163-185
(1995)

\bibitem{PT}
P\'{e}rez, C., Torres, R.H.: Sharp maximal function estimates for multilinear singular integrals. \emph{Contemp. Math.}, {\bf 320}, 323-331 (2003)

\bibitem{RV}
Ruiz, A., Vega, L.: Unique comtinuation for Schr\"{o}dinger operators with potential in Morrey spaces. Publ. Mat. {\bf 35}(1),291-298 (1991)

\bibitem{S}
Stein, E.M.: Singular integral and differentiability properties of functions. Princeton University
Press, Princeton (1971)

\bibitem{T}
Tang, L.: Weighted estimates for vector-valued commutators of multilinear operators. \emph{Proc. Roy. Soc. Edinburgh Sect. A}, {\bf138}, 897-922 (2008)

\bibitem{U}
Uchiyama, A.: On the compactness of operators of Hankel type. T\^{o}hoku Math. J. {\bf 30}, 163-171 (1978)

\bibitem{WZC}
Wang, D.H., Zhou, J., Chen, W.Y.: Characterization of CMO via compactness of the commutators of bilinear fractional integral operators. arXiv:1612.01116


\bibitem{W}
Wang, S.: The compactness of the commutator of fractional integral operator (in Chinese). Chin. Ann. Math {\bf 8}(A), 475-482 (1987)

\end{thebibliography}
\end{document}